\newtheorem{theorem}{Theorem}[section]
\newtheorem{lemma}[theorem]{Lemma}
\newtheorem{proposition}[theorem]{Proposition}
\newtheorem{corollary}[theorem]{Corollary}
\theoremstyle{definition}
\theoremstyle{remark}
\newtheorem{remark}[theorem]{Remark}
\numberwithin{equation}{section}
\begin{document}
\title[Positivity of $2\times 2$ block matrices of operators]{Positivity of $2\times 2$ block matrices of operators}
\author[M. S. Moslehian, M. Kian, Q. Xu ]{Mohammad Sal Moslehian$^1$, Mohsen Kian$^2$ \MakeLowercase{and} Qingxiang Xu$^3$}
\address{$^1$Department of Pure Mathematics, Center Of Excellence in Analysis on Algebraic Structures (CEAAS), Ferdowsi University of Mashhad, P. O. Box 1159, Mashhad 91775, Iran}
\email{moslehian@um.ac.ir; moslehian@yahoo.com}

\address{$^2$Department of Mathematics, University of Bojnord, P. O. Box 1339, Bojnord 94531, Iran}
\email{kian@ub.ac.ir; kian\_tak@yahoo.com}

\address{$^3$Department of Mathematics, Shanghai Normal University, Shanghai 200234, PR China.}
\email{qxxu@shnu.edu.cn; qingxiang\_xu@126.com}

\dedicatory{Dedicated to the memory of Ronald. G. Douglas (1938-2018)}

\subjclass[2010]{Primary 47A63; Secondary 46L05. }

\keywords{positive operator, block matrix, Hilbert space.}

\begin{abstract}
We review some significant generalizations and applications of the celebrated Douglas theorem on equivalence of factorization, range inclusion, and majorization of operators. We then apply it to find a characterization of the positivity of $2\times 2$ block matrices of operators on Hilbert spaces and finally describe the nature of such block matrices and provide several ways for showing their positivity.
\end{abstract}
\maketitle

\section{Introduction}
Let $\mathbb{B}(\mathscr{H}, \mathscr{K})$ denote the space of all bounded linear operators from a complex Hilbert space $(\mathscr{H}, \langle\cdot,\cdot\rangle)$ into a Hilbert space $\mathscr{K}$. We stand $\mathbb{B}(\mathscr{H})$ for $\mathbb{B}(\mathscr{H}, \mathscr{K})$ with $\mathscr{H}=\mathscr{K}$ as a $C^*$-algebra with the operator norm $\|\cdot\|$ and the unit $I$. If $\mathscr{H}=\mathbb{C}^n$, we identify $\mathbb{B}(\mathbb{C}^n)$ with the matrix algebra $\mathbb{M}_n(\mathbb{C})$ of $n\times n$ complex matrices. An operator $A$ is called positive if $\langle Ax,x\rangle\geq0$ for all $x\in{\mathcal H }$, and we then write $A\geq 0$. If $A$ is invertible and positive, then we write $A>0$ and say $A$ is strictly positive. For self-adjoint operators $A,B \in \mathbb{B}(\mathscr{H})$, we say $A\leq B~(\mbox{resp.,} A<B)$ if $B-A\geq0~(\mbox{resp.,} B-A>0)$. The sets of positive and strictly positive operators are denoted by ${\mathbb B}_+({\mathscr H})$ and ${\mathbb B}_{++}({\mathscr H})$, respectively. By a $C^*$-algebra we mean a closed $*$-subalgebra of $\mathbb{B}(\mathscr{H})$. A von Neumann algebra is a $C^*$-algebra that admits an isometric surjective embedding onto the dual of a Banach space.

Block matrices of operators are significant, since they allow us to represent an operator by some simpler operators. Sometimes some properties of an operator $T \in \mathbb{B}(\mathscr{H}_1,\mathscr{H}_2)$ could be illustrated by those of some operators of $\mathbb{B}(\mathscr{H}_1\oplus\mathscr{H}_2)$ in a natural way.
In general, if $\mathscr{H}$ is a Hilbert space of arbitrary dimension, we may decompose $\mathscr{H}$ as the direct sum $\mathscr{H}=\mathscr{H}_1\oplus \mathscr{H}_2$ of an appropriate closed subspace $\mathscr{H}_1$, that is the range of a (orthogonal) projection $P$, and its orthogonal complement $\mathscr{H}_2=\mathscr{H}_1^\perp=\mathcal{R} (I-P)$. Any element $A\in \mathbb{B}(\mathscr{H}_1 \oplus \mathscr{H}_2)$ can be expresses as a block matrix $\left(\begin{array}{cc} A_{11} & A_{12} \\ A_{21} & A_{22}\end{array}\right)$, where $A_{11}=PAP, A_{12}=PA(I-P), A_{21}=(I-P)AP$, and $A_{22}=(I-P)A(I-P)$. Evidently, the spaces $\mathscr{H}_1$ and $\mathscr{H}_2$ are reducing under $A$ if and only if $A_{12}=0=A_{21}$. In this case, we say that $A$ is the direct sum of $A_{11}$ and $A_{22}$ and write $A=A_{11}\oplus A_{22}$.

For any operator $A$ between linear spaces, the range and the null space of $A$ are denoted by ${\mathcal R}(A)$ and ${\mathcal N}(A)$, respectively.

In 1966, R. G. Douglas established the following celebrated assertion, known as the Douglas theorem or Douglas majorization theorem.\\
\textbf{Douglas Theorem~}\cite[Theorem 1]{DOU}. {\it If $A, B \in \mathbb{B}(\mathscr{H})$, then the following statements are equivalent:
\begin{enumerate}
\item[{\rm (i)}] $\mathcal{R}(A) \subseteq \mathcal{R}(B)$;
\item[{\rm (ii)}] $A=BC$ for some $C \in \mathbb{B}(\mathscr{H})$;
\item[{\rm (iii)}]$AA^* \leq k^2 BB^*$ for some $k\geq 0$ (or equivalently $\|A^*x\| \leq k\|B^*x\|$ for all $x\in \mathscr{H}$).
\end{enumerate}
Moreover, if {\rm (i)}, {\rm (ii)}, and {\rm (iii)} are valid, then there exists a unique operator $C$ (known as the Douglas Solution in the literature) so that
\begin{enumerate}
\item[{\rm (a)}] $\|C\|^2=\inf\{\mu|AA^*\leq \mu BB^*\}$;
\item[{\rm (b)}] $\mathcal{N}(A) = \mathcal{N}(C)$;
\item[{\rm (c)}] $\mathcal{R}(C) \subseteq \mathcal{R}(B^{*})^{-}$.
\end{enumerate}
}
Let us present a scketch of proof for the equivalences of (i), (ii), and (iii) for the convenience of the readers.
\begin{proof}
(i) $\Rightarrow$ (ii). For $x\in \mathscr{H}$ there is $z\in \mathscr{H}$ such that $Ax=Bz$. Since $\mathscr{H}={\mathcal N}(B)\oplus {\mathcal N}(B)^{\perp}$, so there exist unique elements $y' \in{\mathcal N}(B)$ and $y\in{\mathcal N}(B)^{\perp}$ such that $z=y'+y$. Thus $Ax=Bz=By'+By=By$. So we can define $C: \mathscr{H}\to \mathscr{H}$ by $Cx=y$. The operator $C$ is linear, bounded by the closed graph theorem, and $A=BC$.\\
(ii) $\Rightarrow$ (iii). It follows from $A=BC$ that $AA^*=BCC^*B^*\leq B\|CC^*\|B^*=\|C\|^2BB^*$.\\
(iii) $\Rightarrow$ (i). Let $D: {\mathcal R}(B)^* \to {\mathcal R}(A^*)$ be defined by $D(B^*x)=A^*x$. This gives us a well-defined linear mapping since
$\|D(B^*x)\|^2=\|A^*x\|^2=\langle AA^*x,x\rangle \leq k^2 \langle BB^*x,x\rangle=k^2\|B^*x\|^2$. The operator $D$ has a unique extension to $\overline{{\mathcal R}(B^*)}$. Setting $Dx=0$ for all $x\in \overline{{\mathcal R}(B^*)}^{\perp}={\mathcal N}(B)$, we reach to a bounded linear mapping $D$ such that $DB^*=A^*$. Thus $A=BD^*$, whence ${\mathcal R}(A) \subseteq {\mathcal R}(B)$.
\end{proof}
The Douglas theorem has been extensively applied in studies of quotients of operators \cite{IZU}, relationship between majorization and factorization in general $C^*$-algebras \cite{FIA}, lifting commuting operators \cite{DMP}, partial operator matrices \cite{HZ}, similarities and quasi similarities \cite{CF}, frames \cite{KHP}, and linear relations \cite{PS}.

An application of the Douglas theorem with $A^*$ and $|A|=(A^*A)^{1/2}$ instead of $A$ and $B$, respectively, gives the polar decomposition of $A=U|A|$, where $U=C^*$ is a partial isometry. In addition, $$\mathcal{R}(A)=\mathcal{R}((AA^*)^{1/2}). \quad (\diamond)$$

In the proof of (i) $\Rightarrow$ (ii), one can easily observe that $C=B_0^{-1}A$, where $B_0$ is the restriction of $B$ to ${\mathcal N}(B)^{\perp}$. From this one can immediately conclude that if both $A$ and $B$ are self-adjoint, then $C$ in (ii) is invertible if and only if $A$ and $B$ have the same range and nullity; cf. \cite[P.\,259, Corollary 1]{FW}. Thus two positive operators have the same range if and only if one of them is a factor of the other by an invertible operator. Therefore a positive operator $A$ has closed range if and only if $\mathcal{R}(A)=\mathcal{R}(A^{1/2})$, see \cite{FW}. Very recently, this fact is extended to the adjointable operators on Hilbert $C^*$-modules (see the last section for this terminology) by showing that $\mathcal{R}(A)$ is closed if and only if $\mathcal{R}(A)=\mathcal{R}(A^\alpha)$ for all (some) $0<\alpha \neq 1$ and this occurs if and only if $\mathcal{R}(A^\alpha)$ is closed for all (some) $0<\alpha \neq 1$; see \cite{VMX}.

Fillmore and Williams \cite{FW} extended the Douglas theorem to several operators as follows: They first use ($\diamond$) to show that $\mathcal{R}(A)+\mathcal{R}(B)=\mathcal{R}(\sqrt{AA^*+BB^*})$. To prove their claim, they passed to the direct sum $\mathscr{H}\oplus \mathscr{H}$ and considered $T=\left( \begin{array}{cc} A & B \\ 0 & 0 \\
 \end{array} \right) \in \mathbb{B}(\mathscr{H}\oplus \mathscr{H})$ and verified that
 $$(\mathcal{R}(A)+\mathcal{R}(B))\oplus 0 = \mathcal{R}(T)=\mathcal{R}((TT^*)^{1/2})=\mathcal{R}(\sqrt{AA^*+BB^*})\oplus 0.$$
Second, for $A, B_1, B_2 \in \mathbb{B}(\mathscr{H})$, they established the following conditions are equivalent:
\begin{enumerate}
\item[{\rm (i')}] $\mathcal{R}(A) \subseteq \mathcal{R}(B_1)+\mathcal{R}(B_2)$;
\item[{\rm (ii')}] $A=B_1C_1+B_2C_2$ for some $C_1, C_2 \in \mathbb{B}(\mathscr{H})$;
\item[{\rm (iii')}] $AA^* \leq k^2 (B_1B_1^*+B_2B_2^*)$ for some $k\geq 0$.
\end{enumerate}
To prove that conditions (i') and (iii') are equivalent, it is enough to consider the fact $\mathcal{R}(B_1)+\mathcal{R}(B_2)=\mathcal{R}(\sqrt{B_1B_1^*+B_2B_2^*})$ and use the Douglas theorem. Clearly (ii') implies (i'). Hence it is enough to show that (i') implies (ii'). To this end, set $S=\left( \begin{array}{cc} A & 0 \\ 0 & 0 \\ \end{array} \right)$ and $T=\left( \begin{array}{cc} B_1 & B_2 \\ 0 & 0 \\ \end{array} \right)$. Then $\mathcal{R}(S)\subseteq\mathcal{R}(T)$. Utilizing the Douglas theorem, we conclude that $S=TC$ for some $C=\left( \begin{array}{cc} C_1 & C_3 \\ C_2 & C_4 \\ \end{array} \right)\in \mathbb{B}(\mathscr{H}\oplus \mathscr{H})$.

A natural conjecture is that for $A_1, A_2, B_1, B_2 \in \mathbb{B}(\mathscr{H})$, the following conditions are equivalent:
\begin{enumerate}
\item[{\rm (i'')}] $\mathcal{R}(A_1)+ \mathcal{R}(A_2) \subseteq \mathcal{R}(B_1)+\mathcal{R}(B_2)$;
\item[{\rm (ii'')}] $A_k=B_1C_{1k}+B_2C_{2k}$ for some $C_{jk}\in \mathbb{B}(\mathscr{H})$ for $j,k=1,2$;
\item[{\rm (iii'')}] $A_1A_1^*+A_2A_2^* \leq k^2 (B_1B_1^*+B_2B_2^*)$ for some $k\geq 0$.
\end{enumerate}
Twice applications of the above result of Fillmore and Williams ensures that (i'') and (iii'') are equivalent and (iii'') implies that (ii'') but it is not clear whether (ii'') implies (i'').

In the case when we replace $\mathbb{B}(\mathscr{H})$ by a von Neumann algebra $\mathfrak{M}$, we can choose $C$ to be in $\mathfrak{M}$. Recall a result due to J. Cuntz stating that if $\mathfrak{A}$ is a $C^*$-algebra and $A,B \in \mathfrak{A}$ with $AA^* \leq BB^*$, then there exists a sequence $\{C_n\}$ in the unit ball of $\mathfrak{A}$ such that $\lim_n\|A-BC_n\|=0$; see \cite{FS}. Following the argument given by Fialkow and Salas \cite{FS}, assume that $A, B \in \mathfrak{M}$ and $AA^*\leq BB^*$. Then there is a sequence $\{C_n\}$ in the unit ball of $\mathfrak{M}$ such that $A=\lim_nBC_n$ in the norm topology. Since the unit ball of $\mathfrak{M}$ is compact in the weak operator topology, we may assume that $\{C_n\}$ converges to some $C$ in the unit ball of $\mathfrak{M}$. Thus $A=BC$.

One can prove this result in another way: It is easy to see that for any unitary $U$ in the double commutant of $\mathfrak{M}$, the operator $U^*CU$ satisfies the conditions of the Douglas theorem and so, by the uniqueness, $U^*CU=C$. Hence $C$ belongs to the double commutant of $\mathfrak{M}$, which is indeed itself $\mathfrak{M}$; see \cite{NOS}.

Let $A\in \mathbb{B}(\mathscr{H})$ be a positive operator. Then $\mathscr{H}$ can be endowed with the semi-inner product $\langle \xi,\eta \rangle_A:=\langle A\xi,\eta \rangle$ and we can investigate the adjoint operation: An operator $T\in \mathbb{B}(\mathscr{H})$ is called $A$-adjointable if there is an operator $T^\#\in \mathbb{B}(\mathscr{H})$ such that $\langle T\xi,\eta \rangle_A= \langle \xi, T^\#\eta \rangle_A$. The Douglas theorem ensures that $T$ admits an $A$-adjoint if and only if ${\mathcal R}(T^*A)\subseteq {\mathcal R}(A)$. Such adjoint $T^\#$ has many properties similar to those of the usual adjoint $T^*$; see \cite {ACG1}.

In 1971-1972, Leech \cite{LEE} showed that if $A$ and $B$ are contraction operators on a Hilbert space $\mathscr{H}$ that commute with a shift operator $S$, then $A = BC$ for some contraction operator $C$ on $\mathscr{H}$ that commutes with $S$ if and only if $AA^{*} \leq BB^{*}$. This result has been applied in indefinite inner product spaces.

Let $\mathbb{M}_n(\mathfrak{A})$ be the $n\times n$ matrices with entries in an algebra $\mathfrak{A}$ of operators acting on a Hilbert space $\mathscr{H}$. The algebra $\mathfrak{A}$ is said to have the Douglas property if for each $n$ and each $A, B \in \mathbb{M}_n(\mathfrak{A})$, there exists a contraction $C \in \mathbb{M}_n(\mathfrak{A})$ such that $AC = B$ if and only if $AA^* \leq BB^*$. Thus, the Douglas theorem and Leech's Theorem say that
$\mathbb{B}(\mathscr{H})$ and the algebra of analytic Toeplitz operators, respectively, have the Douglas property. The Douglas property for the some significant classes of $C^*$-algebras is studied by Fialkow and Salas \cite{FS}; in particular, they showed that every injective $C^*$-algebra has the Douglas property: Recall that a $C^*$-algebra $\mathfrak{A}$ of operators on a Hilbert space $\mathscr{H}$ is said to be injective if there exists a unital (contractive) completely positive map $\Phi: \mathbb{B}(\mathscr{H})\to \mathfrak{A}$ such that $\Phi(A_1CA_2)=A_1\Phi(C)A_2$ for all $A_1, A_2 \in\mathfrak{A}$ and all $C\in \mathbb{B}(\mathscr{H})$. Now, if $A, B\in\mathfrak{A}$ satisfy $AA^*\leq BB^*$, then by the Douglas theorem, there is a contraction $C\in \mathbb{B}(\mathscr{H})$ such that $A=BC$. Hence $A=\Phi(A)=\Phi(BC)=B\Phi(C)$ and $\|\Phi(C)\|\leq \|C\|\leq 1$.

Nayak \cite{NAY} shows that the $C^*$-algebra $C_0(X)$ of all continuous functions on $X$ vanishing at infinity has the Douglas property if and only if $X$ is sub-Stonean. He gives a counterexample of a $C^*$-algebra without the Douglas property:

Let $X=\{\frac{1}{n}: n \in \mathbb{N}\}\cup\{0\}$ be the compact topological subspace of the Euclidean space $\mathbb{R}$. Let $f(x)$ be defined by $x$ when $x=\frac{1}{2n}$ for some $n$ and $0$ otherwise, and let $g(x)=x$ for all $x\in X$. Clearly $\overline{f}f \leq \overline{g}g$, but if $f=gh$ for some function $h$ defined on $X$, then $h$ would not be continuous at zero. Thus $C(X)$ does not have the Douglas property.

In this expository article, we review some significant generalizations and applications of the celebrated Douglas theorem on equivalence of factorization, range inclusion, and majorization of operators. We then apply it to find a characterization of the positivity $2\times 2$ block matrices of operators on Hilbert spaces and finally describe the nature of such matrices and provide several ways for showing their positivity.


\section{Douglas theorem in the setting of Banach spaces}

In 1973, Embry \cite{EMB} proved that if $A$ and $B$ are bounded linear operators on a Banach space $X$, then the following statements are equivalent: (i) $A=CB$ for some bounded linear operator $C: {\mathcal R}(B) \to X$; (ii) $\|Ax\|\leq k\|Bx\|$ for some $k\geq 0$ and all $x\in X$; (iii) ${\mathcal R}(A^*) \subseteq {\mathcal R}(B^*)$.

In 1978, Bouldin \cite{BOU} showed that if $X$ is the Banach space $c_0$ consisting of complex sequences converging to zero with the usual Schauder basis $\{e_n\}$, and operators $A$ and $B$ are defined on $X$ by $A(e_k)=0$ for $k\neq 1$ and $Ae_1=(2^{-1}, 2^{-2}, \ldots)$ and $B(x_1, x_2, \ldots)=(2^{-1}x_1, 2^{-2}x_2, \ldots)$, respectively, then $\|A^\prime f\|\leq c\|B^\prime f\|$ for all $f\in (c_0)^\prime$, where $A^\prime$ denotes the dual (Banach adjoint) of $A$ but not $\mathcal{R}(A) \subseteq \mathcal{R}(B)$.

In 1991, Andrew and Patterson \cite{AP} showed that on many of the classical Banach spaces including $L_p\,\,(1 \leq p<\infty, p\neq 2)$, the sequence spaces $c_0$ and $\ell_p\,\,(1 <p<\infty, p\neq 2)$, and the function spaces $C(\Omega)$ with $\Omega$ compact and metric, there exist operators $A$ and $B$ with $\mathcal{R}(A) \subseteq \mathcal{R}(B)$ yet $A\neq BC$ for any bounded linear operator $C$.

In 2004, Barnes \cite{BAR} presented a comprehensive picture of relationships among the concepts of majorization, range inclusion, and factorization in the setting of bounded linear operators on Banach spaces.

Douglas in \cite{DOU} also obtained a version of his theorem for unbounded operators acting on Hilbert spaces. Recently, Forough \cite{FOR} extended the Douglas theorem to the context of closed densely defined operators on Banach spaces.

In general, the Douglas theorem does not hold in Krein spaces (not even in the finite dimensional case). A version of the Douglas theorem for Krein spaces is given by Rodman \cite{ROD}.

The authors of \cite{WZ} considered the Douglas theorem in the setting of non-Archimedean Banach spaces over complete nontrivially non-Archimedean-valued fields.


\section{Douglas theorem and operator equations}

There are significant applications of the Douglas theorem in study of solutions of operator equations like $AX = B$. Nakamoto \cite{NAK} used the Douglas theorem to prove an existence theorem for solutions of the operator equation $XAX = B$. Arias, Corach, and Gonzalez \cite{ACG2} introduced the notion of reduced solution which is a generalization of the concept of Douglas solution.

The notion of generalized inverse plays an essential role in the topic. An operator $A \in \mathbb{B}(\mathscr{K},\mathscr{H})$ is said to be a generalized inverse of an operator $A^\prime \in \mathbb{B}(\mathscr{H},\mathscr{K})$ if $AA^\prime A=A$. It is known that $A \in \mathbb{B}(\mathscr{H},\mathscr{K})$ has a generalized inverse if and only if $\mathcal{R}(A)$ is closed. If so, one can choose $A^\prime$ in such a way that $A^\prime AA^\prime=A^\prime$, $(AA^\prime)^*=AA^\prime$, and $(A^\prime A)^*=A^\prime A$. Such a generalized inverse is unique, denoted by $A^\dagger$, and is called the Moore--Penrose inverse of $A$.

Let $A\in \mathbb{B}(\mathscr{H},\mathscr{K}), B\in \mathbb{B}(\mathscr{G},\mathscr{L}), C \in \mathbb{B}(\mathscr{G},\mathscr{K})$. If one of the subspaces ${\mathcal R}(A)$, $R(B)$, or ${\mathcal R}(C)$ is closed, then $AXB=C$ is solvable if and only if ${\mathcal R}(C)\subseteq{\mathcal R}(A)$ and ${\mathcal R}(C^*)\subseteq{\mathcal R}(B^*)$ if and only if $ATCSB=C$ for all generalized inverses $T,S$ of $A,B$, respectively, see \cite{AG} as well as \cite{LDLY}.

There are some applications of the Douglas theorem in establishing some Putnam--Duglede type theorems. Let $A, B \in \mathbb{B}(\mathscr{H})$ and $\delta_{A, B}(X):=AX-XB$ and $\Delta_{A, B}(X):=AXB-X$ for $X\in \mathbb{B}(\mathscr{H})$. An operator $A\in \mathbb{B}(\mathscr{H})$ satisfies the Putnam--Fuglede property $\delta$ (the Putnam--Fuglede property $\Delta$, respectively), if for every isometry $V\in \mathbb{B}(\mathscr{H})$ for which the equation $\delta_{A, V^*}(X)=0$ ($\Delta_{A,V^*}(X)=0$, respectively) has a non-trivial solution $X\in \mathbb{B}(\mathscr{H})$, the solution $X$ fulfills $\delta_{A^*, V}(X)=0$ ($\Delta_{A^*,V}(X)=0$, respectively). Duggal and Kubrusly \cite{DK} used the Douglas theorem to show that $A$ satisfies the Putnam--Fuglede property $\delta$ if and only if it satisfies the Putnam--Fuglede property $\Delta$; see also \cite{MN}.


\section {Douglas theorem in the framework of Hilbert $C^*$-modules}

Hilbert $C^*$-modules are generalizations of Hilbert spaces by allowing inner products to take values in some $C^{*}$-algebras instead of the field of complex numbers. More precisely, a Hilbert module over a $C^*$-algebra $\mathfrak{A}$ is a right $ \mathfrak{A}$-module equipped with an $ \mathfrak{A}$-valued inner product $\langle \cdot, \cdot \rangle: \mathscr{H} \times \mathscr{H} \to \mathfrak{A}$ such that $\mathscr{H}$ is complete with respect to the induced norm defined by $\| x\| = \| \langle x,x\rangle \|^{\frac{1}{2}}$ for $x\in \mathscr{H}$. A closed submodule $\mathscr{F}$ of a Hilbert $\mathfrak{A}$-module $\mathscr{H}$ is said to be orthogonally complemented if $\mathscr{H}=\mathscr{F}\oplus \mathscr{F}^\bot$, where $\mathscr{F}^\bot=\left\{x\in \mathscr{H} : \langle x,y\rangle=0\ \mbox{for any }\ y\in \mathscr{F}\right\}$. Let $\mathfrak{L}( \mathscr{H}, \mathscr{K})$ be the set of maps $A: \mathscr{H} \to \mathscr{K}$ between Hilbert $\mathfrak{A}$-modules for which there is a map $A^{*}: \mathscr{K}\to \mathscr{H}$, called the adjoint operator of $A$, such that $\langle Ax,y\rangle= \langle x,A^{*}y\rangle, \mbox{for any $x\in \mathscr{H}$ and $y\in \mathscr{K}$}$. In general, an operator may be not adjointable. We use $\mathfrak{L}( \mathscr{H})$ to denote the $C^*$-algebra $\mathfrak{L}( \mathscr{H}, \mathscr{H})$. If $A\in \mathfrak{L}( \mathscr{H}, \mathscr{K})$ does not have closed range, then neither $ {\mathcal N}(A) $ nor $ \overline{{\mathcal R}(A)} $ need to be orthogonally complemented. In addition, if $A\in \mathfrak{L}( \mathscr{H}, \mathscr{K})$ and $ \overline{{\mathcal R}(A^*)} $ is not orthogonally complemented, then it may happen that ${\mathcal N}(A)^{\bot}\neq \overline{{\mathcal R}(A^*)}$; see \cite{LAN}. The above facts show that the theory Hilbert $C^*$-modules are much different and more complicated than that of Hilbert spaces.

A generalization of the Douglas theorem to the framework of Hilbert $C^*$-modules in which we do not assume that $\overline{{\mathcal R}(T^*)}$ is orthogonally complemented reads as follows.
\begin{theorem}\label{thm:fang s result} \cite[Theorem 1.1]{FYY} \cite[Corollary 2.5]{FMX} Let $\mathfrak{A}$ be a $C^*$-algebra, $\mathscr{E},\mathscr{H}$ and $\mathscr{K}$ be Hilbert $\mathfrak{A}$-modules. Let $T\in {\mathfrak{L}}(\mathscr{E},\mathscr{K})$ and $T^\prime\in {\mathfrak{L}}(\mathscr{H},\mathscr{K})$. Then the following statements are equivalent:
\begin{enumerate}
\item[{\rm (i)}] $T^\prime (T^\prime)^*\le \lambda TT^*$ for some $\lambda>0$;
\item[{\rm (ii)}]There exists $\mu>0$ such that $\Vert (T^\prime)^*z\Vert\le \mu \Vert T^*z\Vert$, for any $z\in \mathscr{K}$.
\end{enumerate}
\end{theorem}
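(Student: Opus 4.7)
The plan is to handle the two directions separately, with essentially all of the substance concentrated in (ii)$\Rightarrow$(i). For (i)$\Rightarrow$(ii), I would simply test the operator inequality $T'(T')^*\le\lambda TT^*$ against a vector $z\in\mathscr{K}$: positivity in $\mathfrak{L}(\mathscr{K})$ is characterised by the $\mathfrak{A}$-valued inner product, so this immediately gives $\langle(T')^*z,(T')^*z\rangle\le\lambda\langle T^*z,T^*z\rangle$ in $\mathfrak{A}_+$. Because $0\le a\le b$ forces $\|a\|\le\|b\|$ in any $C^*$-algebra, taking $\mathfrak{A}$-norms yields $\|(T')^*z\|^2\le\lambda\|T^*z\|^2$, i.e.\ (ii) with $\mu=\sqrt{\lambda}$.

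The reverse direction demands the opposite passage: from a scalar norm inequality back to an $\mathfrak{A}$-valued operator inequality. A single test vector is not enough, so my plan is to exploit the right $\mathfrak{A}$-linearity of $T^*$ and $(T')^*$. After replacing $\mathfrak{A}$ by its unitization one may assume $\mathfrak{A}$ is unital. Substituting $z\leftarrow za$ for $a\in\mathfrak{A}$ in (ii) and using $\langle xa,xa\rangle=a^*\langle x,x\rangle a$ rewrites (ii) as
\[
\|a^*\langle(T')^*z,(T')^*z\rangle a\|\le\mu^2\,\|a^*\langle T^*z,T^*z\rangle a\|,\qquad a\in\mathfrak{A}.
\]
Setting $P=\langle(T')^*z,(T')^*z\rangle$ and $Q=\langle T^*z,T^*z\rangle$ (both in $\mathfrak{A}_+$), this reads $\|a^*Pa\|\le\mu^2\|a^*Qa\|$ for every $a$; meanwhile, by the same characterization of positivity in $\mathfrak{L}(\mathscr{K})$, the target $T'(T')^*\le\mu^2 TT^*$ is equivalent to $P\le\mu^2 Q$ in $\mathfrak{A}$ for every $z\in\mathscr{K}$. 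Hence the theorem reduces to the purely $C^*$-algebraic lemma: \emph{if $P,Q\in\mathfrak{A}_+$ satisfy $\|a^*Pa\|\le C\|a^*Qa\|$ for all $a\in\mathfrak{A}$, then $P\le CQ$.}

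To prove the lemma I would use a resolvent trick. For $\varepsilon>0$, take $a=(Q+\varepsilon)^{-1/2}$, a well-defined element after unitization. Continuous functional calculus gives $a^*Qa=Q(Q+\varepsilon)^{-1}\le 1$, so $\|a^*Qa\|\le 1$ and therefore $\|a^*Pa\|\le C$. Since $a^*Pa$ is positive, this norm bound is actually the operator bound $a^*Pa\le C\cdot 1$; conjugating both sides by $(Q+\varepsilon)^{1/2}$ yields $P\le C(Q+\varepsilon)$, and letting $\varepsilon\downarrow 0$, using norm-closedness of the positive cone, gives $P\le CQ$. Applying the lemma to each $z\in\mathscr{K}$ produces $\langle T'(T')^*z,z\rangle\le\mu^2\langle TT^*z,z\rangle$ in $\mathfrak{A}_+$ for every $z$, which is precisely (i) with $\lambda=\mu^2$.

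The main obstacle is this lemma: the $C^*$-norm on $\mathfrak{A}$ retains only the top spectral value of a positive element, whereas the order relation on $\mathfrak{A}_+$ is much finer, so a blanket norm inequality does not automatically yield an order inequality. The device that breaks this deadlock is the resolvent choice $a=(Q+\varepsilon)^{-1/2}$: it normalises $a^*Qa$ to an element of norm at most $1$ and thereby forces the norm bound on $a^*Pa$ to become an honest operator inequality, after which the conjugation by $(Q+\varepsilon)^{1/2}$ and the passage $\varepsilon\downarrow 0$ are routine. The preliminary unitization is essential precisely to make sense of this inverse square root.
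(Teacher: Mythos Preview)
The paper does not supply a proof of this theorem; it is stated with citations to \cite[Theorem~1.1]{FYY} and \cite[Corollary~2.5]{FMX} and then used. So there is no in-paper argument to compare against.

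Your proof is correct. The direction (i)$\Rightarrow$(ii) is immediate once one invokes the standard fact (e.g.\ Lance, Lemma~4.1) that $S\in\mathfrak{L}(\mathscr{K})$ is positive iff $\langle Sz,z\rangle\ge 0$ in $\mathfrak{A}$ for every $z$, together with monotonicity of the $C^*$-norm on positive elements. For (ii)$\Rightarrow$(i), the amplification step $z\mapsto za$ and the resolvent choice $a=(Q+\varepsilon)^{-1/2}$ are exactly the right device, and the final passage back to positivity of $\mu^2TT^*-T'(T')^*$ again uses Lance's Lemma~4.1. One point you state a bit briskly is the unitization: a Hilbert $\mathfrak{A}$-module is not \emph{a priori} a module over $\tilde{\mathfrak{A}}$, but it becomes one canonically via $x\cdot(a+\lambda 1)=xa+\lambda x$, and the adjointable operators, the $\mathfrak{A}$-valued inner product, and the norm are unchanged under this extension; this is what legitimises plugging $a=(Q+\varepsilon)^{-1/2}\in\tilde{\mathfrak{A}}$ into your inequality. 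With that remark made explicit, the argument is complete.
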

It is notable that $T^\prime (T^\prime)^*\le \lambda TT^*$ for some $\lambda>0$ does not ensure ${\mathcal R}(T^\prime)\subseteq {\mathcal R}(T) $.
\begin{theorem} \label{thm:two orthogonally complemented conditions} \cite[Theorem 1.1]{FYY} \cite[Theorem 3.2]{FMX} Let $\mathscr{E}, \mathscr{K}$ be Hilbert $\mathfrak{A}$-modules and $T$ be in $\mathfrak{L}(\mathscr{E},\mathscr{K})$. Then the following statements are equivalent:
\begin{enumerate}
\item[{\rm (i)}] $\overline{{\mathcal R}(T^*)}$ is orthogonally complemented;
\item[{\rm (ii)}] For any Hilbert $\mathfrak{A}$-module $\mathscr{H}$ and any $T^\prime\in {\mathfrak{L}}(\mathscr{H},\mathscr{K})$, the equation
$T^\prime =TX$ for $X\in {\mathfrak{L}}(\mathscr{H},\mathscr{E})$ is solvable whenever ${\mathcal R}(T^\prime)\subseteq {\mathcal R}(T)$;
\item[{\rm (iii)}] The equation
$S=TX$ for $X \in {\mathfrak{L}}(\mathscr{G},\mathscr{E})$ is solvable, where $\mathscr{G}=\overline{\mathcal{R}(T^*)}$ and $S$ is the restriction of $T$ on $\mathscr{G}$.
\end{enumerate}
If condition (i) is satisfied and $T^\prime\in {\mathfrak{L}}(\mathscr{H},\mathscr{K})$ is such that $\mathcal{R}(T^\prime)\subseteq {\mathcal R}(T)$, then there exists a unique $D\in {\mathfrak{L}}(\mathscr{H},\mathscr{E})$ which satisfies
\begin{equation}\label{equ:uniqueness of D}T^\prime=TD\ \mbox{and}\ {\mathcal R}(D)\subseteq {\mathcal N}(T)^\bot.\end{equation}
In this case,
\begin{equation}\label{equ:additional conditions of D}\Vert D\Vert^2=\inf\left\{\lambda : T^\prime(T^\prime)^*\le \lambda TT^*\right\}.\end{equation}
\end{theorem}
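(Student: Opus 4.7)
My plan is to establish the three-way equivalence via the cycle (i) $\Rightarrow$ (ii) $\Rightarrow$ (iii) $\Rightarrow$ (i), with the last being the delicate direction, and then to verify the uniqueness and norm statement for $D$ under hypothesis (i).

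For (i) $\Rightarrow$ (ii), orthogonal complementation gives $\mathscr{E} = \mathscr{G} \oplus \mathscr{G}^\bot$ with $\mathscr{G}^\bot = \mathcal{N}(T)$, so $\mathcal{R}(T) = \mathcal{R}(S)$ and $S \in \mathfrak{L}(\mathscr{G},\mathscr{K})$ is injective; moreover, the inclusion $\iota\colon\mathscr{G}\hookrightarrow\mathscr{E}$ is adjointable. Given $T' \in \mathfrak{L}(\mathscr{H},\mathscr{K})$ with $\mathcal{R}(T') \subseteq \mathcal{R}(T)$, the range inclusion yields $\mathcal{N}(T^*) \subseteq \mathcal{N}((T')^*)$, so $\tilde D\colon T^* z \mapsto (T')^* z$ is well-defined on $\mathcal{R}(T^*)$. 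An application of Theorem~\ref{thm:fang s result} (combined with the set-theoretic construction of $D$ via the projection $P_\mathscr{G}$) shows $\tilde D$ is bounded; extending it continuously to $\mathscr{G} = \overline{\mathcal{R}(T^*)}$ and by zero to $\mathscr{G}^\bot$ yields $\tilde D \in \mathfrak{L}(\mathscr{E},\mathscr{H})$, and $D := \tilde D^*$ is the required adjointable factor, automatically satisfying $\mathcal{R}(D) \subseteq \mathscr{G}$. For (ii) $\Rightarrow$ (iii), the crucial observation is that $S$ is always adjointable with $S^* k := T^* k$ (well-defined because $T^* k \in \mathcal{R}(T^*) \subseteq \mathscr{G}$); applying (ii) with $\mathscr{H} = \mathscr{G}$ and $T' = S$ then yields the desired $X \in \mathfrak{L}(\mathscr{G},\mathscr{E})$ with $TX = S$, since $\mathcal{R}(S) \subseteq \mathcal{R}(T)$ trivially.

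The hardest direction is (iii) $\Rightarrow$ (i). Given $X \in \mathfrak{L}(\mathscr{G},\mathscr{E})$ with $TX = S$, I would first verify that $X^*\vert_\mathscr{G} = I_\mathscr{G}$: since $Xg - g \in \mathcal{N}(T) = \mathscr{G}^\bot$ for every $g \in \mathscr{G}$, we have $\langle X^* g, g'\rangle = \langle g, Xg'\rangle = \langle g, g'\rangle$ for all $g, g' \in \mathscr{G}$. Thus $X^*\colon \mathscr{E} \to \mathscr{G}$ is a surjective adjointable retraction of the inclusion, and $\mathcal{R}(X^*) = \mathscr{G}$ is closed. By Lance's closed-range theorem for adjointable operators on Hilbert $C^*$-modules, $\mathcal{R}(X)$ is then also closed and orthogonally complemented in $\mathscr{E}$, and the operator $X(X^*X)^{-1}X^* \in \mathfrak{L}(\mathscr{E})$ (well-defined because $X^*X \geq I_\mathscr{G}$ is invertible) is a self-adjoint idempotent projecting onto $\mathcal{R}(X)$. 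The main obstacle is converting this projection onto $\mathcal{R}(X)$ into an orthogonal projection onto $\mathscr{G}$ itself; my plan is to argue that the twin constraints $X^*\vert_\mathscr{G} = I_\mathscr{G}$ and $TX = S$, together with the adjointability of $X$, force the equality $\mathcal{R}(X) = \mathscr{G}$, so that the natural candidate already projects onto the right target.

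Finally, uniqueness of $D$ under (i) follows from $\mathcal{R}(D_1 - D_2) \subseteq \mathcal{N}(T) \cap \mathcal{N}(T)^\bot = \{0\}$. For the norm, the inequality $\|D\|^2 \geq \inf\{\lambda : T'(T')^* \leq \lambda TT^*\}$ is immediate from $T'(T')^* = TDD^*T^* \leq \|D\|^2 TT^*$. The reverse uses the factorization $T' = SD$ with $D$ mapping into $\mathscr{G}$ thanks to (i): from $T'(T')^* \leq \lambda TT^* = \lambda SS^*$ one deduces $\langle (DD^* - \lambda I_\mathscr{G})S^* k, S^* k\rangle \leq 0$ for every $k \in \mathscr{K}$, and density of $\mathcal{R}(S^*) = \mathcal{R}(T^*)$ in $\mathscr{G}$ then yields $DD^* \leq \lambda I_\mathscr{G}$, hence $\|D\|^2 \leq \lambda$.
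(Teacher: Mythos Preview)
The paper does not supply its own proof of this theorem; it is quoted verbatim from \cite{FYY} and \cite{FMX}. So there is no in-paper argument to compare against, and the proposal must be judged on its own merits.

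Your implications (i)$\Rightarrow$(ii) and (ii)$\Rightarrow$(iii), and the uniqueness/norm paragraph, are essentially sound, although in (i)$\Rightarrow$(ii) the appeal to Theorem~\ref{thm:fang s result} is misplaced: that theorem is an equivalence between the majorization $T'(T')^*\le\lambda TT^*$ and the norm inequality $\|(T')^*z\|\le\mu\|T^*z\|$, neither of which you have from the bare hypothesis $\mathcal R(T')\subseteq\mathcal R(T)$. What actually gives the bound on $\tilde D$ is your set-theoretic $D_0$ together with the closed graph theorem: once $D_0$ is bounded with $SD_0=T'$, one gets $\|(T')^*k\|=\sup_{\|h\|\le1}\|\langle D_0h,S^*k\rangle\|\le\|D_0\|\,\|T^*k\|$, and the rest of your outline goes through.

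The genuine gap is in (iii)$\Rightarrow$(i). You correctly obtain $X^*|_{\mathscr G}=I_{\mathscr G}$, hence $X^*X\ge I_{\mathscr G}$ and $\mathcal R(X)$ orthogonally complemented. But your plan to force $\mathcal R(X)=\mathscr G$ cannot succeed: the constraints $TX=S$ and adjointability of $X$ do \emph{not} pin down $X$ as the inclusion. Concretely, take $\mathfrak A$ any unital $C^*$-algebra, $\mathscr E=\mathfrak A^2$, $\mathscr K=\mathfrak A$, $T(a,b)=a$; then $\mathscr G=\mathfrak A\oplus0$ and for any fixed $b\in\mathfrak A$ the map $X(a,0)=(a,ba)$ is adjointable with $TX=S$, yet $\mathcal R(X)=\{(a,ba):a\in\mathfrak A\}\ne\mathscr G$ whenever $b\ne0$. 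In the same example $\mathcal N(X^*)=\{(-b^*d,d):d\in\mathfrak A\}$ is not contained in $\mathcal N(T)=0\oplus\mathfrak A$ either, so the alternative route ``show $\mathcal N(X^*)\subseteq\mathcal N(T)$ and conclude $\mathscr E=\mathscr G+\mathscr G^\bot$'' also fails for a generic solution $X$. What you have established is only that $\mathscr G$ is an \emph{algebraic} direct complement (via the adjointable retraction $X^*$) of the orthogonally complemented submodule $\mathcal N(X^*)$; turning this into orthogonal complementation of $\mathscr G$ itself requires a further idea that is absent from the proposal.
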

If $T\in \mathfrak{L}(\mathscr{E},\mathscr{K})$ is such that $\overline{{\mathcal R}(T^*)}$ is not orthogonally complemented, then it is immediately follows from Theorem \ref{thm:two orthogonally complemented conditions} that the equation $S=TX$ for $X \in {\mathfrak{L}}(\mathscr{G},\mathscr{E})$ is unsolvable. The general form of the Douglas theorem does not hold in the setting of Hilbert $C^*$-modules as shown in \cite[Proposition 3.4]{FMX} that there exist Hilbert $\mathfrak{A}$-modules $\mathscr{H},\mathscr{K}$, and $T\in {\mathfrak{L}}(\mathscr{H},\mathscr{K})$ and $T^\prime\in {\mathfrak{L}}(\mathscr{K})$ such that ${\mathcal R}(T^\prime)$ is contained in ${\mathcal R}(T)$ properly, whereas the equation $TX=T^\prime$ for $X\in\mathfrak{L}(\mathscr{K},\mathscr{H})$ has no solution.

Another application of the Douglas theorem is in the study of the parallel sum. Let $\mathscr{H}$ be a Hilbert space and $A,B\in\mathbb{B}(\mathscr{H})$ be positive. Put
\begin{equation*}T=\left(
  \begin{array}{cc}
   0 & 0 \\
   A^{1/2} & B^{1/2} \\
  \end{array}\right)\in \mathbb{B}(\mathscr{H}\oplus \mathscr{H}).
\end{equation*}
A direct application of ($\diamond$) yields
$$\mathcal{R}(A^{1/2})+\mathcal{R}(B^{1/2})=\mathcal{R}((A+B)^{1/2})\quad (\diamond \diamond).$$
It follows from ($\diamond \diamond$) and the Douglas theorem that
there are uniquely determined operators $C, D\in \mathbb{B}(\mathscr{H})$ such that
\begin{eqnarray*}A^{1/2}=(A+B)^{1/2}C \ \mbox{and}\ \mathcal{R}(C)\subseteq \mathcal{N}((A+B)^{1/2})^{\perp},\\
B^{1/2}=(A+B)^{1/2}D \ \mbox{and}\ \mathcal{R}(D)\subseteq \mathcal{N}((A+B)^{1/2})^{\perp}.
\end{eqnarray*}
Following \cite[P.\,277]{FW} the parallel sum of two positive operators $A$ and $B$ is defined by
\begin{equation*}A:B=A^{1/2}C^*DB^{1/2}.
\end{equation*}

Note that if $A+B$ is Moore-Penrose invertible (or equivalently, $\mathcal{R}(A+B)$ is closed), then it is easy to verify that $$C=\left((A+B)^\dag\right)^\frac12\cdot A^\frac12\ \mbox{and}\ D=\left((A+B)^\dag\right)^\frac12\cdot B^\frac12,$$
so in this case we have $A^{1/2}C^*DB^{1/2}=A(A+B)^\dag B$. Thus, the term of the parallel sum for positive operators $A$ and $B$ was generalized in \cite{FW}
without any restrictions on $\mathcal{R}(A+B)$.







\section{Positivity of $2\times 2$ block matrices of operators}

The algebra of all $2\times 2$ matrices with entries in $\mathbb{B}(\mathscr{H})$ is a typical example of the $C^*$-algebra $\mathbb{M}_2({\mathfrak A})$. Clearly, if $C^*$-algebras ${\mathfrak A}$ and ${\mathfrak B}$ are $*$-isomorphic, then $\mathbb{M}_2({\mathfrak A})$ and $\mathbb{M}_2({\mathfrak B})$ are also $*$-isomorpic. But there exist two non-isomorphic unital $C^*$-algebra ${\mathfrak A}$ and ${\mathfrak B}$ such that ${\mathfrak A}\simeq \mathbb{M}_2({\mathfrak A})\simeq \mathbb{M}_2({\mathfrak B})$. For example, consider ${\mathfrak A}=\{T\oplus T; T \in \mathbb{B}(\mathscr{H})\}+\mathbb{K}(\mathscr{H}\oplus \mathscr{H})$ and ${\mathfrak B}=\{T \oplus T\oplus 0 ; T \in \mathbb{B}(\mathscr{H}), 0 \in \mathbb{B}(\mathscr{H}_0)\}+\mathbb{K}(\mathscr{H}\oplus \mathscr{H}\oplus \mathscr{H}_0)$ where $\mathscr{H}$ is a separable infinite dimentional Hilbert space, $\mathscr{H}_0$ is one dimensional, and $\mathbb{K}(\mathscr{H}_1)$ denotes the algebra of compact linear operators on the Hilbert space $\mathscr{H}_1$; see \cite{PLA}.\\
If ${\mathfrak A}$ is a $C^*$-algebra containing $\mathbb{M}_2(\mathbb{C})$ as a unital $C^*$-subalgebra, then ${\mathfrak A}\simeq \mathbb{M}_2({\mathfrak B})$ for some $C^*$-algebra ${\mathfrak B}$, say ${\mathfrak B}=\begin{pmatrix}1 & 0 \\0 & 0\end{pmatrix}{\mathfrak A} \begin{pmatrix}1 & 0 \\0 & 0\end{pmatrix}$. Moreover, a unital $C^*$ algebra ${\mathfrak A}$ is $*$-isomorphic to $\mathbb{M}_2({\bf C})$ iff there exists a projection $p \in {\mathfrak A}$ such that \[p{\mathfrak A}p= \mathbb{C}p, (1-p){\mathfrak A}(1-p)= \mathbb{C}(1-p), (1-p){\mathfrak A}p \neq 0, p{\mathfrak A}(1-p) \neq 0;\] see \cite{MOS2}.\\
The theory of $2 \times 2$ operator matrices is not trivial \cite[Problem 56]{HAL}: If $R,S,T \in \mathbb{B}(\mathscr{H})$ and both $R,T$ are invertible, then $U=\left (\begin{array}{cc}R&S\\0&T \end{array}\right )$ is invertible and $U^{-1}=\left ( \begin{array}{cc}R^{-1}&-R^{-1}ST^{-1}\\0&T^{-1} \end{array}\right)$. The converse is not true, in general. For example, take $\mathscr{H}=\ell^2,~T(x_1,x_2,\cdots)=(x_2,x_3,\cdots), R(x_1,x_2,\cdots)(0,x_1,\cdots)$ and $S(x_1,x_2, \cdots)=(x_1,0,0,\cdots)$. Then $\left (\begin{array}{cc}R&S\\0&T \end{array}\right)$ is invertible but neither $R$ nor $T$ is invertible.

There are several techniques utilizing $2 \times 2$ matrices over a $C^*$-algebra ${\mathfrak A}$, which helps us to solve some problems which could not be treated easily in the $C^*$-algebra ${\mathfrak A}$ itelf. They are used to reduce a problem to a simple one and to pass from a concept to other appropriate ones. For instance, we refer the reader to the Kaplansky density theorem and the Putnam-Fuglede theorem, the study of derivations and similarity problems; the investigation of completely bounded maps, counterexamples, and so on; see \cite{MOS1}.

We adopt some notation, facts and techniques from some books, say \cite{BHA1, MUR, PAU}.

Following we state the first characterization of the positivity of a $2\times 2$ block matrix $M$ defined as:
\begin{equation}\label{defn of M}M=\begin{pmatrix}A & X \\X^* & B\end{pmatrix}\in\mathbb{B}({\mathscr H}\oplus {\mathscr H}),\ \mbox{where $A, B\in {\mathbb B}({\mathscr H})$}.\end{equation}
\begin{theorem}\label{nice1}
Let $M$ be given by \eqref{defn of M} such that
$A \in {\mathbb B}_{++}({\mathscr H})$. Then
$M$ is positive if and only if $B\geq X^*A^{-1}X$.
\end{theorem}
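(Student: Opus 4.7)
My plan is to handle the theorem by the classical Schur complement factorisation, which is available precisely because $A$ is assumed to be invertible. The invertibility of $A$ is exactly what makes the argument elementary and avoids any appeal to the Douglas theorem at this stage; range inclusions will enter only in the more delicate versions of this result (presumably treated later in the paper), not here.

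First, I would introduce the block operator
\[L=\begin{pmatrix} I & 0 \\ X^{*}A^{-1} & I\end{pmatrix}\in\mathbb{B}(\mathscr{H}\oplus\mathscr{H}),\]
which is bounded because $A^{-1}\in\mathbb{B}(\mathscr{H})$ by hypothesis, and is invertible with explicit inverse $\left(\begin{smallmatrix} I & 0 \\ -X^{*}A^{-1} & I\end{smallmatrix}\right)$. A direct block multiplication then yields the factorisation
\[M=L\begin{pmatrix} A & 0 \\ 0 & B-X^{*}A^{-1}X\end{pmatrix}L^{*}.\]
Verifying this identity is the only computation in the whole proof: the $(1,1)$-entry gives $A$, the off-diagonal entries give $X$ and $X^{*}$, and the $(2,2)$-entry unfolds as $X^{*}A^{-1}X+(B-X^{*}A^{-1}X)=B$.

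Since $L$ is invertible, the map $Z\mapsto LZL^{*}$ is a congruence of self-adjoint operators on $\mathscr{H}\oplus\mathscr{H}$ and therefore preserves positivity in both directions; concretely, $\langle Mz,z\rangle=\langle D L^{*}z,L^{*}z\rangle$ for $D$ the middle factor, combined with surjectivity of $L^{*}$, gives the equivalence. Hence $M\geq 0$ if and only if the block-diagonal middle factor is positive, which in turn is equivalent to the simultaneous positivity of its diagonal blocks $A$ and $B-X^{*}A^{-1}X$. Since $A>0$ is given, the characterisation reduces to $B\geq X^{*}A^{-1}X$, as claimed.

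I do not anticipate any genuine obstacle: the argument is essentially a Gaussian elimination on $2\times 2$ operator blocks, and the only thing one needs to check is the stated factorisation, which is routine. The place where the hypothesis $A\in\mathbb{B}_{++}(\mathscr{H})$ is genuinely used is in forming the operator $A^{-1}$; without invertibility of $A$ one cannot even write down the Schur complement $B-X^{*}A^{-1}X$, and an honest generalisation would require Douglas-type arguments involving $\mathcal{R}(X)\subseteq \mathcal{R}(A^{1/2})$ and a replacement of $A^{-1}$ by the Douglas solution or Moore--Penrose inverse.
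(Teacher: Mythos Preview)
Your proof is correct and is essentially the paper's first proof: the paper uses the congruence $CMC^{*}=A\oplus(B-X^{*}A^{-1}X)$ with $C=\left(\begin{smallmatrix} I & 0 \\ -X^{*}A^{-1} & I\end{smallmatrix}\right)$, which is precisely your identity $M=LDL^{*}$ rewritten with $C=L^{-1}$. The paper also records two alternative proofs (one via $M^{1/2}$ and one via direct inner-product computation), but your Schur-complement congruence matches the first one exactly.
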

\begin{proof} We provide three proofs with different methods.

\textbf{The first proof} (\cite[Theorem 1.3.3]{BHA1}):  Put $C=\begin{pmatrix}
I & O \\
-X^*A^{-1} & I
\end{pmatrix}$. Then $C$ is invertible such that
\begin{equation*}
A \oplus (B-X^*A^{-1}X)=C\cdot M\cdot C^*,
\end{equation*}
hence we may use the fact that ``if $0 \leq D_1\leq D_2$, then $Z^*D_1Z \leq Z^*D_2Z$ for all $Z\in \mathbb{B}(\mathscr{H})$'' to conclude that $M\ge 0\Longleftrightarrow B\ge X^*A^{-1}X$.

\textbf{The second proof:} Suppose that $M\ge 0$. Let
\begin{equation*}M^\frac12=\left(
       \begin{array}{cc}
        Y_{11} & Y_{12} \\
        Y_{21} & Y_{22} \\
       \end{array}
       \right)=\left(
         \begin{array}{cc}
          Y_1 & Y_2 \\
         \end{array}
         \right),\ \mbox{where $Y_1=\left(
                \begin{array}{c}
                 Y_{11} \\
                 Y_{21} \\
                \end{array}
                \right)$ and $Y_2=\left(
                \begin{array}{c}
                 Y_{12} \\
                 Y_{22} \\
                \end{array}
                \right)$}.
         \end{equation*}
Then
\begin{equation*}M=\left(
      \begin{array}{c}
      Y_1^* \\
      Y_2^* \\
      \end{array}
     \right)\cdot (Y_1,Y_2)=\left(
           \begin{array}{cc}
            Y_1^*Y_1 & Y_1^*Y_2 \\
            Y_2^*Y_1 & Y_2^*Y_2 \\
           \end{array}
           \right),
\end{equation*}
hence $A=Y_1^*Y_1, X=Y_1^*Y_2$ and $B=Y_2^*Y_2$. Let
$$Z=Y_1\cdot A^{-1}\cdot Y_1^*=Y_1\cdot (Y_1^*Y_1)^{-1}\cdot Y_1^*.$$
Then $Z$ is positive since $A^{-1}$ is, and $Z^2=Z$, therefore $Z$ is a projection, which gives $I-Z\ge 0$. It follows that
$B-X^*A^{-1}X=Y_2^*(I-Z)Y_2\ge 0$.

Conversely, assume that $B\ge X^*A^{-1}X$. Let
\begin{equation}\label{defn of Y}Y=(Y_1,Y_2)\ \mbox{with $Y_1=A^\frac12$ and $Y_2=A^{-\frac12}X$}.\end{equation} Then
$M=Y^*Y+0\oplus (B-X^*A^{-1}X)\ge 0$.

\textbf{The third proof:} Assume that $M\ge 0$. Then for any $x,y\in {\mathscr H}$, we have
$$\left\langle M(x,-y)^T, (x,-y)^T\right\rangle\geq 0.$$
Hence,
$$\langle Ax,x\rangle - 2 \textrm{Re} \langle Xy,x\rangle +\langle By,y\rangle \geq 0.$$
Replacing $x$ by $A^{-1}Xy$ and noting that $A^{-1}$ is positive, we get
\begin{eqnarray*}
\langle By,y\rangle &\geq& 2 \textrm{Re} \langle Xy,A^{-1}Xy\rangle-\langle AA^{-1}Xy,A^{-1}Xy\rangle\\
&=& 2 \textrm{Re} \langle X^*A^{-1}Xy,y\rangle-\langle X^*A^{-1}Xy,y\rangle\\
&=& \langle X^*A^{-1}Xy,y\rangle
\end{eqnarray*}
since $\langle X^*A^{-1}Xy,y\rangle \geq 0$. Thus $B \geq X^*A^{-1}X$.

Conversely, assume that $B \geq X^*A^{-1}X$ Let $Y$ be defined by \eqref{defn of Y}. Then for any $x,y\in {\mathscr H}$, we have
\begin{align*}\langle M(x,y)^T,(x,y)^T\rangle=\|Y_1x+Y_2y\|^2+\langle (B-X^*A^{-1}X)y, y\rangle\ge 0,
\end{align*}
hence $M\ge 0$.
\end{proof}

\begin{remark} Let $M$ be given by \eqref{defn of M} such that $M\ge 0$. It is interesting to find out an explicit formula for $M^\frac12$ under certain restrictions on $X$ and $B$. In the matrix case, a generalization of the preceding theorem in term of the generalized inverse $A^\dag$ can be found in \cite{FUJ-1,FUJ-2}.
\end{remark}

\begin{corollary}\label{cnice1}
Let $A \in {\mathbb B}_{++}({\mathscr H})$ and $X\in {\mathbb B}({\mathscr H})$. Then
$$X^*A^{-1}X=\min\left\{B \in {\mathbb B}_+({\mathscr H}):
\begin{pmatrix}
A & X \\
X^* & B
\end{pmatrix}\geq0 \right\},$$
or equivalently
$$B - X^*A^{-1}X=\sup\left\{ Y\geq 0: \begin{pmatrix}0 & 0 \\0 & Y\end{pmatrix} \leq \begin{pmatrix}A & X \\X^* & B\end{pmatrix} \right\}.$$
\end{corollary}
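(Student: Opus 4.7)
The corollary is a direct consequence of Theorem \ref{nice1}, and the plan is to deduce both statements by translating the inequalities involving the block matrix into scalar-type conditions on the $(2,2)$ entry.

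First I would treat the minimum formulation. By Theorem \ref{nice1}, the set $\mathcal{S} := \{B\in \mathbb{B}_+(\mathscr{H}) : \left(\begin{smallmatrix} A & X \\ X^* & B \end{smallmatrix}\right)\geq 0\}$ coincides with $\{B\in \mathbb{B}_+(\mathscr{H}) : B \geq X^*A^{-1}X\}$. Since $A^{-1}$ is positive we have $X^*A^{-1}X \geq 0$, so $X^*A^{-1}X$ itself belongs to $\mathcal{S}$. Any other $B\in \mathcal{S}$ satisfies $B \geq X^*A^{-1}X$ by the theorem, so $X^*A^{-1}X$ is indeed the minimum. This handles the first equality.

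Next I would derive the equivalent supremum formulation by a simple shift. Given $B$, for any self-adjoint $Y\in\mathbb{B}(\mathscr{H})$ the inequality
\begin{equation*}
\begin{pmatrix}0 & 0 \\ 0 & Y\end{pmatrix} \leq \begin{pmatrix}A & X \\ X^* & B\end{pmatrix}
\end{equation*}
is exactly the statement that $\left(\begin{smallmatrix} A & X \\ X^* & B-Y \end{smallmatrix}\right)\geq 0$, which by Theorem \ref{nice1} is equivalent to $B-Y \geq X^*A^{-1}X$, i.e., $Y \leq B - X^*A^{-1}X$. Restricting to $Y\geq 0$, the set in question is $\{Y\in\mathbb{B}_+(\mathscr{H}) : Y \leq B - X^*A^{-1}X\}$, and the obvious candidate $Y = B - X^*A^{-1}X$ is itself positive (this is the positivity hypothesis on $M$ via Theorem \ref{nice1}) and lies in this set, so it is the supremum.

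There is no real obstacle here; the whole content is that Theorem \ref{nice1} transforms the block inequality into a clean scalar-like inequality on positive operators. The only minor point to keep in mind is that in the second formulation one should implicitly assume $M\geq 0$, so that $B - X^*A^{-1}X$ is positive and the supremum is attained inside $\mathbb{B}_+(\mathscr{H})$; otherwise the set over which the supremum is taken may fail to contain $0$, and the statement trivializes.
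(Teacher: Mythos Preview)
Your proof is correct and follows exactly the intended route: the paper states the corollary without proof, treating it as an immediate consequence of Theorem~\ref{nice1}, and your argument spells out precisely that deduction. Your observation that the second formulation tacitly requires $M\geq 0$ (so that $B-X^*A^{-1}X\in\mathbb{B}_+(\mathscr{H})$) is a valid clarification of an implicit hypothesis.
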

\begin{remark}\label{rem:exchange with 1 and 2}
Let $M$ be given by \eqref{defn of M} such that $B \in {\mathbb B}_{++}({\mathscr H})$. Then Theorem \ref{nice1} turns to
 \begin{equation}\label{equ:exchange with 1 and 2}M\geq 0 \ \Longleftrightarrow\ A\geq XB^{-1}X^*.\end{equation}
 To see this, note that the matrix $V=\begin{pmatrix}0 & I \\I & 0\end{pmatrix}$ is unitary. Therefore the block matrix $M$ is positive if and only if $VMV^*=\begin{pmatrix}B & X^* \\X & A\end{pmatrix}$ is positive. Theorem \ref{nice1} now gives the equivalent condition $A\geq XB^{-1}X^*$.
 
An application of \eqref{equ:exchange with 1 and 2} can be found in a recent work of Najafi \cite{NAJ}.
\end{remark}

\begin{corollary}\label{co1}
 Let $M$ be given by \eqref{defn of M} such that $B \in {\mathbb B}_{++}({\mathscr H})$ and $M\ge 0$. Then there exists a partial isometry $U$ such that $|X|\leq |A^{1/2}UB^{1/2}|$. If furthermore
 $\overline{\mathcal{R}(X)}=\overline{\mathcal{R}(X^*)}=\mathscr{H}$, then $U$ can chosen to be a unitary.
 \end{corollary}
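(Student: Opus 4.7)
The plan is to reduce the problem to a convenient factorization of $X$ via the Douglas theorem, then construct the desired partial isometry from a polar decomposition. Since $B\in \mathbb{B}_{++}(\mathscr{H})$ and $M\ge 0$, Remark \ref{rem:exchange with 1 and 2} (the swapped form of Theorem \ref{nice1}) gives $A\ge X B^{-1}X^{*}$, which we rewrite as $(XB^{-1/2})(XB^{-1/2})^{*}\le A^{1/2}(A^{1/2})^{*}$. The Douglas theorem then yields $C\in \mathbb{B}(\mathscr{H})$ with $\|C\|\le 1$ such that $XB^{-1/2}=A^{1/2}C$, i.e.\ $X=A^{1/2}CB^{1/2}$. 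Set $S:=A^{1/2}C$ and let $S=U|S|$ be its polar decomposition; the $U$ appearing here is my candidate partial isometry.

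Since $B^{1/2}$ is invertible, by the L\"owner--Heinz monotonicity of $t\mapsto t^{1/2}$ the desired inequality $|X|\le |A^{1/2}UB^{1/2}|$ will follow once I establish $|X|^{2}\le |A^{1/2}UB^{1/2}|^{2}$; after conjugating away the outer invertible factors $B^{1/2}$, this reduces to the key inequality $C^{*}AC\le U^{*}AU$. To approach it I use $CC^{*}\le I$ to get $SS^{*}=A^{1/2}CC^{*}A^{1/2}\le A$, so that
\[ (C^{*}AC)^{2} = S^{*}(SS^{*})S \le S^{*}AS = C^{*}A^{2}C. \]
Using $S=U|S|$ to rewrite the right-hand side as $|S|\,U^{*}AU\,|S|$, and setting $D:=U^{*}AU-|S|^{2}$ (which is self-adjoint), one obtains the ``sandwiched'' positivity $|S|D|S|\ge 0$.

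The main obstacle is upgrading this sandwiched positivity to the full positivity $D\ge 0$. For this I exploit the polar-decomposition identity $\ker U=\ker|S|$: for any $x\in \ker|S|$ both $|S|^{2}x=0$ and $U^{*}AUx=0$, so $Dx=0$. By self-adjointness, $D$ then preserves $\overline{\mathcal{R}(|S|)}=(\ker|S|)^{\perp}$, while on $\mathcal{R}(|S|)$ the sandwiched inequality yields $\langle Dx,x\rangle\ge 0$, extended by density and continuity to $\overline{\mathcal{R}(|S|)}$. Combined with $D|_{\ker|S|}=0$ and the orthogonal decomposition $\mathscr{H}=\overline{\mathcal{R}(|S|)}\oplus\ker|S|$, a cross-term computation gives $\langle Dz,z\rangle\ge 0$ for every $z\in\mathscr{H}$, so $D\ge 0$. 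This proves the key inequality and hence $|X|\le |A^{1/2}UB^{1/2}|$.

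For the final assertion, assume $\overline{\mathcal{R}(X)}=\overline{\mathcal{R}(X^{*})}=\mathscr{H}$. From $X=SB^{1/2}$ and the fact that $B^{1/2}$ is a linear homeomorphism of $\mathscr{H}$ onto itself, one has $\overline{\mathcal{R}(S)}=\overline{\mathcal{R}(X)}=\mathscr{H}$; likewise $X^{*}=B^{1/2}S^{*}$ together with density of $\mathcal{R}(X^{*})$ forces that of $\mathcal{R}(S^{*})$, i.e.\ $\overline{\mathcal{R}(S^{*})}=\mathscr{H}$. Thus the initial space $\overline{\mathcal{R}(S^{*})}$ and the final space $\overline{\mathcal{R}(S)}$ of $U$ both equal $\mathscr{H}$, so $U$ is a surjective isometry, i.e.\ a unitary.
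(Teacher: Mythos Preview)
Your proof is correct, and in fact you and the paper choose the \emph{same} partial isometry: your $S=A^{1/2}C$ equals $XB^{-1/2}$, which is exactly the paper's $T$, so both take $U$ from the polar decomposition $XB^{-1/2}=U\,|XB^{-1/2}|$. The detour through the Douglas solution $C$ is unnecessary, since $SS^*=XB^{-1}X^*\le A$ follows directly from Remark~\ref{rem:exchange with 1 and 2}, and $C^*AC=S^*S=|S|^2$ anyway.

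The substantive difference is in how the key inequality $|S|^2\le U^*AU$ (your $D\ge 0$) is reached. The paper uses the polar-decomposition identity $|T|=U^*T$, whence $|T|^2=U^*TT^*U$; conjugating $TT^*=XB^{-1}X^*\le A$ by $U$ then yields $|T|^2\le U^*AU$ in one stroke, and sandwiching by $B^{1/2}$ finishes. Your route instead first obtains the weaker sandwiched form $|S|\,D\,|S|\ge 0$ and then spends a separate paragraph upgrading it to $D\ge 0$ via the decomposition $\mathscr{H}=\overline{\mathcal{R}(|S|)}\oplus\ker|S|$ together with $D|_{\ker|S|}=0$. That upgrade argument is valid, but the identity $|T|^2=U^*TT^*U$ renders it unnecessary. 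The treatment of the unitary case is essentially identical in both proofs.
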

\begin{proof} Let $T=XB^{-\frac12}$ and $T=U|T|$ be the polar decomposition. Then $|T|=U^*T$, hence
$|T|^2=U^*T\cdot T^*U$, so by Remark~\ref{rem:exchange with 1 and 2} we have
\begin{align*}|X|^2=&X^*X=B^\frac12\cdot |T|^2 \cdot B^\frac12=B^\frac12\cdot U^*TT^*U \cdot B^\frac12\\
=&B^\frac12 U^*\cdot XB^{-1}X^*\cdot (B^\frac12 U^*)^*\\
\le& B^\frac12 U^*\cdot A\cdot (B^\frac12 U^*)^*=|A^{1/2}UB^{1/2}|^2,
\end{align*}
which gives $|X|\leq |A^{1/2}UB^{1/2}|$ by the operator monotonicity of $t\mapsto t^{1/2}$.

Since $\mathcal{R}(U)=\overline{\mathcal{R}(T)}=\overline{\mathcal{R}(X)}$ and
$\mathcal{R}(U^*)=\overline{\mathcal{R}(T^*)}=\mathscr{H}\Longleftrightarrow\overline{\mathcal{R}(X^*)}=\mathscr{H}$, we know that
$U$ is a unitary if and only if $\overline{\mathcal{R}(X)}=\overline{\mathcal{R}(X^*)}=\mathscr{H}$.
\end{proof}

\begin{remark}
It should be noted that the converse of Corollary \ref{co1} does not hold: $|X|\leq |A^{1/2}UB^{1/2}|$ does not imply the positivity of the block matrix $M$. For example, take $X=I$, $A=\left(\begin{array}{cc} 2 & 0 \\ 0 & 1\end{array}\right)$ and $B^{-1}=\left(\begin{array}{cc} 1 & 0\\ 0 & 2\end{array}\right)$ and $U=\left(\begin{array}{cc} 0 & 1 \\ 1 & 0\end{array}\right)$.
\end{remark}
A necessary condition for the positivity of $M$ can be obtained by using Douglas theorem as follows: 
\begin{lemma}\cite[Lemma~6]{FFN}\label{lem:plus1}
 Let $M$ be given by \eqref{defn of M} such that $M\ge 0$. Then
$$\mathcal{R}(X)\subseteq \mathcal{R}(A^\frac12)\ \mbox{and}\ \mathcal{R}(X^*)\subseteq \mathcal{R}(B^\frac12).$$
\end{lemma}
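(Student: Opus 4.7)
The plan is to reduce the problem to the strictly-positive case of Theorem \ref{nice1} (via Remark \ref{rem:exchange with 1 and 2}) and then invoke the Douglas theorem, matching the announcement that precedes the lemma.

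First, I would replace the $(2,2)$ block by an invertible scalar multiple of the identity. Since $M \geq 0$ and $(\|B\|+1)I - B \geq 0$, the perturbed matrix
\[
M' := \begin{pmatrix} A & X \\ X^* & (\|B\|+1)I \end{pmatrix} = M + 0\oplus\bigl((\|B\|+1)I - B\bigr)
\]
is positive and its $(2,2)$ block is strictly positive. Applying Remark \ref{rem:exchange with 1 and 2} to $M'$ yields
\[
A \geq X\bigl((\|B\|+1)I\bigr)^{-1}X^* = (\|B\|+1)^{-1}\,XX^*,
\]
equivalently $XX^* \leq (\|B\|+1)\,A = (\|B\|+1)\,A^{1/2}(A^{1/2})^*$. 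The implication (iii) $\Rightarrow$ (i) of the Douglas theorem then immediately gives $\mathcal{R}(X) \subseteq \mathcal{R}(A^{1/2})$.

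Second, the symmetric argument — replace the $(1,1)$ block by $(\|A\|+1)I$ and apply Theorem \ref{nice1} directly — produces $X^*X \leq (\|A\|+1)\,B = (\|A\|+1)\,B^{1/2}(B^{1/2})^*$, and the Douglas theorem again concludes $\mathcal{R}(X^*) \subseteq \mathcal{R}(B^{1/2})$.

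There is no substantive obstacle; the only minor point is the strict-positivity hypothesis in Theorem \ref{nice1}, which is why I perturb with $+I$ rather than use $\|B\|I$ or $\|A\|I$ directly (handling the degenerate case $B=0$ or $A=0$ separately would require a small Cauchy--Schwarz argument to force $X=0$). An alternative route, which bypasses Theorem \ref{nice1} and uses only the identity $(\diamond)$, is the square-root factorization already displayed in the second proof of Theorem \ref{nice1}: writing $M^{1/2}=(Y_1,Y_2)$ gives $A=Y_1^*Y_1$, $B=Y_2^*Y_2$, $X=Y_1^*Y_2$, and $(\diamond)$ applied to $Y_1^*$ and $Y_2^*$ yields $\mathcal{R}(A^{1/2})=\mathcal{R}(Y_1^*)$ and $\mathcal{R}(B^{1/2})=\mathcal{R}(Y_2^*)$, whence the trivial inclusions $\mathcal{R}(X) = \mathcal{R}(Y_1^*Y_2)\subseteq \mathcal{R}(Y_1^*)$ and $\mathcal{R}(X^*) = \mathcal{R}(Y_2^*Y_1) \subseteq \mathcal{R}(Y_2^*)$ finish the proof. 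Either route is a few lines.
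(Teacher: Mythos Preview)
Your proposal is correct. Your primary route---perturb the diagonal to make one block strictly positive, apply Remark~\ref{rem:exchange with 1 and 2} to obtain $XX^*\le (\|B\|+1)A$, then invoke Douglas---is genuinely different from the paper's argument. The paper instead writes $M^{1/2}=\begin{pmatrix} Y & W\\ W^* & Z\end{pmatrix}$, squares it to obtain $A=Y^2+WW^*$ and $X=YW+WZ$, applies Douglas to the inequalities $YY^*\le A$ and $WW^*\le A$ to get $\mathcal{R}(Y),\mathcal{R}(W)\subseteq\mathcal{R}(A^{1/2})$, and concludes via $\mathcal{R}(X)\subseteq\mathcal{R}(Y)+\mathcal{R}(W)$. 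Your perturbation trick bypasses the square-root decomposition entirely and reaches the key majorization $XX^*\le cA$ in one line; given that Theorem~\ref{nice1} is already available, this is arguably the more economical path. Your alternative column-block route (via $M^{1/2}=(Y_1,Y_2)$ and $(\diamond)$) is the one closest in spirit to the paper's proof---both pass through $M^{1/2}$---but your use of $(\diamond)$ to identify $\mathcal{R}(A^{1/2})=\mathcal{R}(Y_1^*)$ directly is a bit cleaner than the paper's two-summand decomposition of $X$.
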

\begin{proof} Since $M\ge 0$, we have $A,B \in {\mathbb B}_{+}({\mathscr H})$. Let $M^\frac12=\left(
                      \begin{array}{cc}
                       Y & W \\
                       W^* & Z \\
                      \end{array}
                      \right)$. Then direct computation yields
$$M=\left(
  \begin{array}{cc}
  Y^2+ZZ^* & YW+WZ \\
  W^*Y+ZW^* & W^*W+Z^2 \\
  \end{array}
 \right),$$
hence
$$A=Y^2+WW^*\ \mbox{and}\ X=YW+WZ,$$
which indicate by Douglas theorem that $\mathcal{R}(A^\frac12)$ contains both $\mathcal{R}(Y)$ and $\mathcal{R}(W)$. As a result, we have 
$\mathcal{R}(X)\subseteq \mathcal{R}(A^\frac12)$ since it is clear that $\mathcal{R}(X)\subseteq \mathcal{R}(Y)+\mathcal{R}(W)$.
Similarly, we have $\mathcal{R}(X^*)\subseteq \mathcal{R}(B^\frac12)$. 
\end{proof}

Based on the preceding lemma, another characterization of the positivity of $M$ can be derived as follows:
\begin{theorem}\cite[Theorem~7]{FFN}\label{nice3}
 Let $M$ be given by \eqref{defn of M} such that $A,B \in {\mathbb B}_{+}({\mathscr H})$. Then
$M\ge 0$ if and only if there is an operator $C\in {\mathbb B}({\mathscr H})$ such that $X=C^*B^{1/2}$ and $C^*C \leq A$.
\end{theorem}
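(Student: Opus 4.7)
The plan is to treat the two directions separately. The sufficient direction reduces to an algebraic identity exhibiting $M$ as a sum of positive operators, while the necessary direction combines Lemma~\ref{lem:plus1} with the Douglas theorem and a small perturbation of $B$.

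For sufficiency, assuming $X = C^* B^{1/2}$ and $C^*C \leq A$, I would write
\[
M = \begin{pmatrix} C^* \\ B^{1/2} \end{pmatrix}\begin{pmatrix} C & B^{1/2} \end{pmatrix} + \begin{pmatrix} A-C^*C & 0 \\ 0 & 0 \end{pmatrix},
\]
which displays $M$ as the sum of a Gram-type operator $N^*N$ with $N=\begin{pmatrix}C & B^{1/2}\end{pmatrix}$ and a positive diagonal perturbation. This direction is essentially routine.

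For necessity, I would proceed in two steps. First, Lemma~\ref{lem:plus1} yields $\mathcal{R}(X^*) \subseteq \mathcal{R}(B^{1/2})$, and applying the Douglas theorem with the self-adjoint operator $B^{1/2}$ in the role of $B$ produces a unique $C \in \mathbb{B}(\mathscr{H})$ satisfying
\[
X^* = B^{1/2} C \quad \text{and} \quad \mathcal{R}(C) \subseteq \overline{\mathcal{R}(B^{1/2})};
\]
taking adjoints gives $X = C^* B^{1/2}$. Second, to establish $A \geq C^*C$, I would perturb: for each $\epsilon > 0$, the matrix $M + 0 \oplus \epsilon I$ is positive with $B+\epsilon I \in \mathbb{B}_{++}(\mathscr{H})$, so Remark~\ref{rem:exchange with 1 and 2} yields
\[
A \geq X(B+\epsilon I)^{-1}X^* = C^*\bigl[B^{1/2}(B+\epsilon I)^{-1}B^{1/2}\bigr]C = C^* f_\epsilon(B) C,
\]
where $f_\epsilon(t) = t/(t+\epsilon)$. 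As $\epsilon \to 0^+$, the spectral calculus shows $f_\epsilon(B)$ converges strongly to the orthogonal projection $P$ onto $\overline{\mathcal{R}(B)} = \overline{\mathcal{R}(B^{1/2})}$. Since the range condition on $C$ gives $PC = C$, I would conclude $\langle C^*f_\epsilon(B)Cx,x\rangle \to \|Cx\|^2$ for every $x$, so passing to the weak-operator limit yields $A \geq C^*C$.

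The main technical point, and the step where I expect to need to be careful, is this final limit: the Douglas-theorem range constraint $\mathcal{R}(C) \subseteq \overline{\mathcal{R}(B^{1/2})}$ is essential, since without it one would only obtain $A \geq C^*PC$, which is strictly weaker than $A\geq C^*C$ when $\mathcal{R}(C)$ is not contained in the closure of the range of $B^{1/2}$. Thus the \emph{uniqueness} clause of the Douglas theorem, not merely its existence part, is what drives the argument.
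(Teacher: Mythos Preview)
Your proof is correct. The sufficiency direction is identical to the paper's, down to the same algebraic identity. For necessity, both you and the paper begin by invoking Lemma~\ref{lem:plus1} and the Douglas theorem to produce $C$ with $X^*=B^{1/2}C$ and $\mathcal{R}(C)\subseteq\overline{\mathcal{R}(B)}$, so that $PC=C$ for the projection $P$ onto $\overline{\mathcal{R}(B)}$. The divergence is in how $A\ge C^*C$ is then obtained. The paper argues directly that the block matrix $\begin{pmatrix}A & C^*\\ C & P\end{pmatrix}$ is positive by rewriting its quadratic form on vectors $(z,y)$ with $y\in\mathcal{R}(B)$ as the quadratic form of $M$, and then applies the congruence $\begin{pmatrix}I & -C^*\\ 0 & I\end{pmatrix}(\cdot)\begin{pmatrix}I & 0\\ -C & I\end{pmatrix}$ together with $PC=C$ to extract $A-C^*C\ge 0$. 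Your route instead recycles Remark~\ref{rem:exchange with 1 and 2}: perturb to $B+\epsilon I$, deduce $A\ge C^*f_\epsilon(B)C$ with $f_\epsilon(t)=t/(t+\epsilon)$, and pass to the strong limit $f_\epsilon(B)\to P$. The paper's approach is purely algebraic and avoids any limiting argument, at the cost of a small density/orthogonal-decomposition check to see that positivity on $\mathscr{H}\oplus\mathcal{R}(B)$ suffices; your approach trades that for spectral calculus and a weak-operator limit, but has the virtue of making the reduction to the strictly positive case (Theorem~\ref{nice1}) transparent. In both arguments the identity $PC=C$ coming from the uniqueness clause of the Douglas theorem is the decisive step, exactly as you emphasize.
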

\begin{proof}
(i) $\Rightarrow$ (ii). Suppose that $M\ge 0$. By Lemma~\ref{lem:plus1} and the Douglas theorem, there exists $C\in {\mathbb B}({\mathscr H})$
 such that 
 $$X^*=B^\frac12 C\ \mbox{and}\ \mathcal{R}(C)\subseteq \overline{\mathcal{R}(B^\frac12)}=\overline{\mathcal{R}(B)},$$
 hence $PC=C$ and $C^*P=C^*$, where $P$ is the projection from $H$ onto $\overline{\mathcal{R}(B)}$. 
 
 Given any $y\in\mathcal{R}(B)$ and $z\in {\mathscr H}$, we have $y=Bu$ for some $u\in {\mathscr H}$. Put $x=B^\frac12 u$. Then $y=B^\frac12 x$ and thus
 \begin{align*}\left\langle \left(
        \begin{array}{cc}
        A & C^* \\
        C & P \\
        \end{array}
       \right)\left(
          \begin{array}{c}
          z \\
          y \\
          \end{array}
         \right), \left(
          \begin{array}{c}
          z \\
          y \\
          \end{array}
         \right)\right\rangle=&\langle Az,z\rangle+\langle X x,z\rangle++\langle X^*z,x\rangle+\langle Bx,x\rangle\\
       =&\left\langle M\left(
            \begin{array}{c}
            z \\
            x \\
            \end{array}
           \right),\left(
            \begin{array}{c}
            z \\
            x \\
            \end{array}
           \right)\right\rangle\ge 0.
 \end{align*}
 It follows that $\left(
        \begin{array}{cc}
        A & C^* \\
        C & P \\
        \end{array}
       \right)\ge 0$, and so
 $$\left(
  \begin{array}{cc}
  A-C^*C & 0 \\
  0 & P \\
  \end{array}
 \right)=\left(
    \begin{array}{cc}
    I & -C^* \\
    0 & I \\
    \end{array}
   \right)\left(
     \begin{array}{cc}
      A & C^* \\
      C & P \\
     \end{array}
     \right)\left(
       \begin{array}{cc}
        I & 0 \\
        -C & I \\
       \end{array}
       \right)\ge 0,
 $$
 that is, $A\ge C^*C$, as required.

(ii) $\Rightarrow$ (i). It follows from the identity
$$M=\begin{pmatrix}A-C^*C& 0 \\0 & 0\end{pmatrix}+\begin{pmatrix}C & B^{1/2} \\0 & 0\end{pmatrix}^*\begin{pmatrix}C & B^{1/2} \\0 & 0\end{pmatrix}.$$
\end{proof}

Similarly, we have the following corollary:
\begin{corollary}\label{nice3-cor}
 Let $M$ be given by \eqref{defn of M} such that $A,B \in {\mathbb B}_{+}({\mathscr H})$. Then
$M\ge 0$ if and only if there is an operator $C\in {\mathbb B}({\mathscr H})$ such that $X=B^{1/2}C$ and $C^*C \leq B$.
\end{corollary}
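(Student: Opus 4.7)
The plan is to imitate the proof of Theorem~\ref{nice3} step by step, but drawing on the \emph{other} range inclusion supplied by Lemma~\ref{lem:plus1}---namely $\mathcal{R}(X)\subseteq \mathcal{R}(A^{1/2})$ rather than $\mathcal{R}(X^*)\subseteq \mathcal{R}(B^{1/2})$. This produces a Douglas factorization with the square root of $A$ on the left; I read the corollary's ``$B^{1/2}$'' as a typo for ``$A^{1/2}$,'' since $\mathcal{R}(X)\subseteq\mathcal{R}(B^{1/2})$ is not a consequence of $M\ge 0$ in general, whereas $\mathcal{R}(X)\subseteq\mathcal{R}(A^{1/2})$ is.

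For the forward direction, Lemma~\ref{lem:plus1} and the Douglas theorem furnish $C\in\mathbb{B}(\mathscr{H})$ with $X=A^{1/2}C$ and $\mathcal{R}(C)\subseteq\overline{\mathcal{R}(A)}$; letting $P$ be the projection of $\mathscr{H}$ onto $\overline{\mathcal{R}(A)}$ gives $PC=C$ and $C^*P=C^*$. I would then verify the positivity of the auxiliary block $N:=\begin{pmatrix}P&C\\ C^*&B\end{pmatrix}$ via the quadratic form identity, valid whenever $z=A^{1/2}x$,
\begin{equation*}
\left\langle N\begin{pmatrix}z\\ y\end{pmatrix},\begin{pmatrix}z\\ y\end{pmatrix}\right\rangle=\left\langle M\begin{pmatrix}x\\ y\end{pmatrix},\begin{pmatrix}x\\ y\end{pmatrix}\right\rangle\ge 0,
\end{equation*}
and extend this inequality to all $(z,y)\in\mathscr{H}\oplus\mathscr{H}$ by decomposing $z=Pz+(I-P)z$ (the $(I-P)z$-contributions vanish since $\mathcal{R}(C)\subseteq\overline{\mathcal{R}(A)}$) and by a density argument on $\overline{\mathcal{R}(A)}=\overline{\mathcal{R}(A^{1/2})}$. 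Congruence of $N$ by $\begin{pmatrix}I&0\\ -C^*&I\end{pmatrix}$ then reduces it to $P\oplus(B-C^*C)$, which forces $B\ge C^*C$.

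The reverse direction is the identity
\begin{equation*}
M=\begin{pmatrix}0&0\\ 0&B-C^*C\end{pmatrix}+\begin{pmatrix}A^{1/2}&C\\ 0&0\end{pmatrix}^{\!*}\begin{pmatrix}A^{1/2}&C\\ 0&0\end{pmatrix},
\end{equation*}
which exhibits $M$ as a sum of two positive operators.

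The main obstacle is Step~2, the positivity of the auxiliary block $N$: the quadratic form identity is only immediate on $\mathcal{R}(A^{1/2})\oplus\mathscr{H}$, and one must carefully combine the density passage to $\overline{\mathcal{R}(A)}$ with the orthogonality $\mathcal{R}(C)\perp\mathcal{N}(A)$ to cover the complementary directions in $\mathcal{N}(A)\oplus\mathscr{H}$.
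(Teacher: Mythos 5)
Your proof is correct and is essentially the argument the paper intends: the paper offers no explicit proof of this corollary, saying only that it follows ``similarly'' to Theorem~\ref{nice3}, and your argument is precisely that proof with the roles of the two range inclusions of Lemma~\ref{lem:plus1} interchanged (equivalently, Theorem~\ref{nice3} applied to $VMV^*$ with $V$ the swap unitary of Remark~\ref{rem:exchange with 1 and 2}). You are also right to read the printed hypothesis $X=B^{1/2}C$ as a typo for $X=A^{1/2}C$: as literally stated the corollary fails in both directions (for instance $A=0$, $B=X=I$ gives $X=B^{1/2}C$ with $C=I$ and $C^*C\le B$, yet $M\not\ge 0$), and your corrected reading is the statement that actually follows ``similarly.''
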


Recall that an operator $K\in {\mathbb B}({\mathscr H})$ is a contraction if $\|K\|\leq 1$. We can also provide another characterization of the positivity of $M$ in terms of contractive operators. To this end, we need firstly a lemma as follows (see \cite[Proposition 1.3.1]{BHA1} for another proof for matrices):
\begin{lemma}\label{nice2}
An operator $K\in {\mathbb B}({\mathscr H})$ is contractive if and only if $\begin{pmatrix}
I & K \\
K^* & I
\end{pmatrix} \geq 0.$
\end{lemma}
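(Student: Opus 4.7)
The plan is to deduce this as an immediate corollary of Theorem \ref{nice1}. Apply that theorem with $A = B = I \in {\mathbb B}_{++}({\mathscr H})$ and $X = K$. Since the diagonal entry $I$ is strictly positive and $I^{-1} = I$, Theorem \ref{nice1} asserts
\begin{equation*}
\begin{pmatrix} I & K \\ K^* & I \end{pmatrix} \geq 0 \quad \Longleftrightarrow \quad I \geq K^* I^{-1} K = K^*K.
\end{equation*}
Hence it suffices to observe the standard equivalence $K^*K \leq I \Longleftrightarrow \|K\| \leq 1$, which is immediate from $\langle (I - K^*K)x, x\rangle = \|x\|^2 - \|Kx\|^2$ for every $x \in {\mathscr H}$, combined with $\|K\|^2 = \sup_{\|x\|\le 1}\|Kx\|^2$.

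There is no real obstacle here; the lemma is a one-line consequence of the already-established Theorem \ref{nice1}. If one prefers a proof independent of that theorem (for instance, because this lemma will be invoked in a proof of Theorem \ref{nice1} in some alternative development), then the direct route is equally short: for the ``only if'' direction, assume the block matrix is positive and test it on vectors of the form $(-Ky, y)^T$ to obtain $\|x\|^2 - 2\|Ky\|^2 + \|y\|^2 \geq 0$ with $x = -Ky$, yielding $\|Ky\|^2 \leq \|y\|^2$; for the ``if'' direction, if $\|K\|\le 1$, then
\begin{equation*}
\left\langle \begin{pmatrix} I & K \\ K^* & I \end{pmatrix} \begin{pmatrix} x \\ y \end{pmatrix}, \begin{pmatrix} x \\ y \end{pmatrix}\right\rangle = \|x\|^2 + 2\,\mathrm{Re}\langle Ky, x\rangle + \|y\|^2 \geq \bigl(\|x\| - \|Ky\|\bigr)^2 + \|y\|^2 - \|Ky\|^2 \geq 0.
\end{equation*}
Either way, the argument is essentially a reduction to the scalar estimate $\|Kx\| \le \|x\|$, so the lemma requires no new machinery beyond what has already appeared.
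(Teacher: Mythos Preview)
Your proposal is correct and follows essentially the same route as the paper's own proof: both apply Theorem~\ref{nice1} with $A=B=I$, $X=K$ to reduce the positivity of the block matrix to $K^*K\le I$, and then identify this with $\|K\|\le 1$ (the paper phrases the latter via the spectrum and functional calculus, you via the identity $\langle(I-K^*K)x,x\rangle=\|x\|^2-\|Kx\|^2$, which is an immaterial difference). Your optional direct argument is also sound and not used in the paper.
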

\begin{proof}
The operator $K$ is a contraction if and only if $\|K^*K\|=\|K\|^2 \leq 1$. The later is equivalent to $0\leq t\leq 1$ for all $t \in {\rm sp}(K^*K))$. This fact, by the functional calculus for the self-adjoint operator $K^*K$, is equivalent to $K^*K \leq I$. Eventually, it follows from Theorem \ref{nice1} that $K^*K \leq I$ holds if and only if $\begin{pmatrix}
I & K \\
K^* & I
\end{pmatrix} \geq 0$.
\end{proof}

The next theorem is due to T. Ando \cite{ANDO}. Based on Corollary~\ref{nice3-cor}, an alternative proof of this theorem can be found in \cite[Theorem~8]{FFN}. A proof for matrices is stated in \cite[Theorem 7.7.9]{HJ}; see also \cite[Proposition 1.3.2]{BHA1} by which a proof of Theorem \ref{nice1} is given.
\begin{theorem}\label{contr}
Let $A, B\in {\mathbb B}_{+}({\mathscr H})$ and $X\in {\mathbb B}({\mathscr H})$.
Then the following are equivalent:
\begin{enumerate}
\item[(i)] The block matrix $M$ defined by \eqref{defn of M} is positive;
\item[(ii)] There exists a contraction $K$ such that $X=A^{1/2}KB^{1/2}$.
\end{enumerate}
\end{theorem}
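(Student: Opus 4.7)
The plan is to prove the two directions separately, using the machinery already built up in the excerpt.

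For the direction (ii) $\Rightarrow$ (i), I would start from Lemma~\ref{nice2}, which tells me that a contraction $K$ is characterized by the positivity of $\begin{pmatrix} I & K \\ K^* & I \end{pmatrix}$. I would then conjugate this block matrix by the block-diagonal operator $D=\mathrm{diag}(A^{1/2},B^{1/2})\in\mathbb{B}(\mathscr{H}\oplus\mathscr{H})$, using the standard fact that conjugation by any bounded operator preserves positivity. Since
\[
D\begin{pmatrix} I & K \\ K^* & I \end{pmatrix}D^*=\begin{pmatrix} A & A^{1/2}KB^{1/2} \\ B^{1/2}K^*A^{1/2} & B \end{pmatrix},
\]
the assumption $X=A^{1/2}KB^{1/2}$ immediately identifies this with $M$, so $M\ge 0$.

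For the direction (i) $\Rightarrow$ (ii), I would invoke Theorem~\ref{nice3}, which, applied to $M\ge 0$, produces an operator $C\in\mathbb{B}(\mathscr{H})$ with $X=C^*B^{1/2}$ and $C^*C\le A$. The remaining task is to show that $C^*$ itself factors as $A^{1/2}K$ with $K$ a contraction. This is a clean application of the Douglas theorem: rewriting the inequality as
\[
(C^*)(C^*)^*=C^*C\le A=(A^{1/2})(A^{1/2})^*,
\]
the Douglas theorem produces $K\in\mathbb{B}(\mathscr{H})$ with $C^*=A^{1/2}K$ and, by clause (a) of the Douglas theorem, $\|K\|^2=\inf\{\mu:C^*C\le \mu A\}\le 1$. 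Substituting gives $X=C^*B^{1/2}=A^{1/2}KB^{1/2}$, as required.

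Neither direction presents a genuine obstacle once the right earlier result is invoked; the main thing to be careful about is the correct orientation of the Douglas theorem in step two of the forward direction, namely that the norm of the Douglas solution is bounded by the square root of the constant appearing in the majorization, which here is $1$. One could alternatively replace the appeal to Theorem~\ref{nice3} with Corollary~\ref{nice3-cor} (factoring on the other side) followed by a symmetric application of Douglas; both routes land on the same contractive factorization $X=A^{1/2}KB^{1/2}$.
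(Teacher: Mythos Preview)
Your proof is correct. The direction (ii) $\Rightarrow$ (i) is identical to the paper's argument.

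For (i) $\Rightarrow$ (ii), however, you take a genuinely different route. The paper first treats the strictly positive case by invoking Theorem~\ref{nice1} and setting $K=A^{-1/2}XB^{-1/2}$ explicitly, and then handles general $A,B\ge 0$ by perturbing to $A+\tfrac{1}{n}I$, $B+\tfrac{1}{n}I$, obtaining contractions $K_n$, and extracting a weak limit via compactness of the closed unit ball of $\mathbb{B}(\mathscr{H})$. Your argument bypasses the approximation entirely by feeding $M\ge 0$ into Theorem~\ref{nice3} and then applying the Douglas theorem to $C^*C\le A$ to factor $C^*=A^{1/2}K$ with $\|K\|\le 1$. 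This is cleaner and avoids the weak-compactness step, at the cost of relying on Theorem~\ref{nice3} (whose proof already absorbed a Douglas-type argument via Lemma~\ref{lem:plus1}). The paper in fact acknowledges exactly this alternative route in the sentence preceding the theorem, pointing to Corollary~\ref{nice3-cor} and \cite{FFN}.
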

\begin{proof}
(i) $\Rightarrow$ (ii). First assume that $A>0$ and $B>0$. By Theorem~\ref{nice1}, we have $B\geq X^*A^{-1}X$. Hence
\begin{eqnarray*}
I \geq B^{-1/2} X^*A^{-1}XB^{-1/2}=(A^{-1/2}XB^{-1/2})^*(A^{-1/2}XB^{-1/2})\,.
\end{eqnarray*}
Hence $\|A^{-1/2}XB^{-1/2}\|^2=\|(A^{-1/2}XB^{-1/2})^*(A^{-1/2}XB^{-1/2})\| \leq \|I\|=1$. Therefore the operator $K=A^{-1/2}XB^{-1/2}$ is a contraction and $$A^{1/2}KB^{1/2}=A^{1/2}(A^{-1/2}XB^{-1/2})B^{1/2}=X.$$

Second, assume that $A\geq 0$ and $B\geq 0$. Considering $\begin{pmatrix}
A+\frac{1}{n} & X \\
X^* & B+\frac{1}{n}
\end{pmatrix}$ and employing the first case, we get a sequence of contractions $\{K_n\}$ such that
$$(A+\frac{1}{n})^{1/2}K_n(B+\frac{1}{n})^{1/2}=X, \ \mbox{for any $n\in\mathbb{N}$}.$$ Since the closed unit ball of ${\mathbb B}({\mathscr H})$ is weakly compact, we may assume that $\{K_n\}$ converges weakly to a contraction $K\in {\mathbb B}({\mathscr H})$. Taking limits as $n$ tends to infinity, we arrive at $A^{1/2}KB^{1/2}=X$.

(ii) $\Rightarrow$ (i). Since $K$ is a contraction, by Lemma \ref{nice2} we have $\begin{pmatrix}
I & K \\
K^* & I
\end{pmatrix} \geq 0$. Hence
\begin{eqnarray*}
\begin{pmatrix}
A & X \\
X^* & B
\end{pmatrix}&=&\begin{pmatrix}
A & A^{1/2}K^*B^{1/2} \\
B^{1/2}K^*A^{1/2} & B
\end{pmatrix}\\
&=&\begin{pmatrix}
A^{1/2} & 0 \\
0 & B^{1/2}
\end{pmatrix}\begin{pmatrix}
I & K \\
K^* & I
\end{pmatrix}\begin{pmatrix}
A^{1/2} & 0 \\
0 & B^{1/2}
\end{pmatrix} \geq 0\,.
\end{eqnarray*}
\end{proof}

\begin{theorem}\label{contr2}
Let $M$ be given by \eqref{defn of M}. Then the following are equivalent:
\begin{enumerate}
\item[(i)] The block matrix $M$ is positive;
\item[(ii)] $A=Y_1^*Y_1, B=Y_2^*Y_2$, and $X=Y_1^*Y_2$ for some $Y_1, Y_2 \in {\mathbb B}( {\mathscr H},{\mathscr H}\oplus {\mathscr H})$
\end{enumerate}
\end{theorem}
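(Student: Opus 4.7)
The plan is to mirror the factorization argument used in the second proof of Theorem \ref{nice1}, which in fact already contains both implications of the present statement.

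For the implication (i) $\Rightarrow$ (ii), since $M \geq 0$ there exists a positive (hence self-adjoint) square root $M^{1/2} \in \mathbb{B}(\mathscr{H} \oplus \mathscr{H})$. Writing $M^{1/2}$ as a $2 \times 2$ block matrix and letting $Y_1, Y_2 \in \mathbb{B}(\mathscr{H}, \mathscr{H} \oplus \mathscr{H})$ denote its first and second block columns respectively, so that $M^{1/2} = (Y_1, Y_2)$, the identity $M = (M^{1/2})^* M^{1/2}$ gives in block form
$$M = \begin{pmatrix} Y_1^* \\ Y_2^* \end{pmatrix} (Y_1, Y_2) = \begin{pmatrix} Y_1^* Y_1 & Y_1^* Y_2 \\ Y_2^* Y_1 & Y_2^* Y_2 \end{pmatrix}.$$
Matching block entries with \eqref{defn of M} yields $A = Y_1^* Y_1$, $B = Y_2^* Y_2$, and $X = Y_1^* Y_2$, as required.

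For the converse (ii) $\Rightarrow$ (i), given any such $Y_1, Y_2$, define $T = (Y_1, Y_2) \in \mathbb{B}(\mathscr{H} \oplus \mathscr{H})$ by $T(x, y) = Y_1 x + Y_2 y$; its adjoint is the column with entries $Y_1^*, Y_2^*$. The same block computation shows that $T^* T$ equals $M$, and since $T^* T \geq 0$ for every bounded operator $T$, we conclude $M \geq 0$.

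No genuine obstacle arises in this argument: the substantive content is the existence of a positive square root (standard functional calculus on the unital $C^*$-algebra $\mathbb{B}(\mathscr{H} \oplus \mathscr{H})$) together with the observation that block multiplication produces entries of the form $Y_i^* Y_j$. The only point requiring minor care is identifying the block columns of a bounded operator on $\mathscr{H} \oplus \mathscr{H}$ with bounded operators $\mathscr{H} \to \mathscr{H} \oplus \mathscr{H}$, which is immediate from the boundedness of $M^{1/2}$.
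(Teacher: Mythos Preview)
Your proof is correct and is exactly the paper's ``first proof'' of this theorem, which simply points back to the column-factorization argument in the second proof of Theorem~\ref{nice1}. (The paper also records an alternative construction, due to Ando, that builds $Y_1,Y_2$ from a unitary dilation of the contraction $K$ supplied by Theorem~\ref{contr}.)
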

\begin{proof}
\textbf{The first proof:} A proof can be carried out by following the line in the second proof of Theorem~\ref{nice1}.

\textbf{The second proof} (due to T. Ando): By Theorem \ref{contr}, there is a contraction $K$ such that $X=A^{1/2}KB^{1/2}$. It is easy to see that $G=(T_{ij})_{1\le i,j\le 2}$ is a unitary (dilation of $K$), where
$$T_{11}=K, T_{12}=(I-KK^*)^{1/2}, K_{21}=(I-K^*K)^{1/2}\ \mbox{and}\ K_{22}=-K^*.$$ Let $Y_1,Y_2\in {\mathbb B}({\mathscr H}, {\mathscr H}\oplus {\mathscr H})$ be given by
\[Y_1=G^*\begin{pmatrix}A^{1/2}\\0\end{pmatrix}\quad {\rm and}\quad Y_2=\begin{pmatrix}B^{1/2}\\0\end{pmatrix}.\]
Then direct computation yields $A=Y_1^*Y_1, B=Y_2^*Y_2$, and $X=Y_1^*Y_2$.
\end{proof}

\begin{remark}
Utilizing the polar decompositions $Y_k=U_k|Y_k|,\, k=1, 2$ with partial isometries $U_k$, we get
$X=Y_1^*Y_2=|Y_1|U_1^*U_2|Y_2|=A^{1/2}KB^{1/2}$, where $K=U_1^*U_2$ is a contraction. This gives another way to find the contraction in Theorem \ref{contr}.
\end{remark}

\begin{proposition} [A generalized Cauchy--Schwarz inequality]
Let $M$ be given by \eqref{defn of M} such that $A, B\in {\mathbb B}_{+}({\mathscr H})$.
Then $M$
is positive if and only if
\begin{eqnarray}\label{murjan}
|\langle x,Xy\rangle|^2 \leq \langle x,Ax\rangle\,\langle y,By\rangle
\end{eqnarray}
for all $x,y \in \mathscr{H}$.
\end{proposition}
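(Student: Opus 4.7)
The plan is to prove both directions by expanding $\langle M(x,y)^T,(x,y)^T\rangle$ and estimating the cross term, using Theorem~\ref{contr} for the forward direction and an elementary AM-GM step for the converse.

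For the direction $M\ge 0\Rightarrow (\ref{murjan})$, I would invoke Theorem~\ref{contr}, which yields a contraction $K\in\mathbb{B}(\mathscr{H})$ with $X=A^{1/2}KB^{1/2}$. Writing $\langle x,Xy\rangle=\langle A^{1/2}x,KB^{1/2}y\rangle$ and applying the ordinary Cauchy--Schwarz inequality in $\mathscr{H}$, we get
\[
|\langle x,Xy\rangle|^2\le \|A^{1/2}x\|^2\,\|KB^{1/2}y\|^2\le \|A^{1/2}x\|^2\,\|B^{1/2}y\|^2=\langle x,Ax\rangle\,\langle y,By\rangle,
\]
where the second inequality uses $\|K\|\le 1$. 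Since $A,B\ge 0$ are assumed, Theorem~\ref{contr} applies directly, so this is essentially immediate.

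For the converse, I would take any $(x,y)^T\in\mathscr{H}\oplus\mathscr{H}$ and compute
\[
\left\langle M\!\begin{pmatrix}x\\y\end{pmatrix},\begin{pmatrix}x\\y\end{pmatrix}\right\rangle=\langle Ax,x\rangle+\langle By,y\rangle+2\operatorname{Re}\langle Xy,x\rangle.
\]
Setting $a=\langle Ax,x\rangle\ge 0$, $b=\langle By,y\rangle\ge 0$, and $c=\langle Xy,x\rangle$ (noting $|\langle x,Xy\rangle|=|c|$), the hypothesis \eqref{murjan} gives $|c|\le\sqrt{ab}$. Then
\[
a+b+2\operatorname{Re}(c)\ge a+b-2|c|\ge a+b-2\sqrt{ab}=(\sqrt{a}-\sqrt{b})^2\ge 0,
\]
which is exactly the positivity of $M$.

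There is no real obstacle here: both implications are short and mechanical once Theorem~\ref{contr} is in hand. The only place to be careful is the forward direction, where one must not forget that $\|KB^{1/2}y\|\le\|B^{1/2}y\|$ requires $K$ to be a contraction (not merely a bounded operator), which is precisely what Theorem~\ref{contr} delivers; in the converse, one must remember that $\langle X^*x,y\rangle$ and $\langle Xy,x\rangle$ are complex conjugates so that the cross term assembles into $2\operatorname{Re}\langle Xy,x\rangle$ and can be controlled by $|c|$.
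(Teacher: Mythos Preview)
Your proof is correct and follows essentially the same approach as the paper: the forward direction invokes Theorem~\ref{contr} and then applies Cauchy--Schwarz together with $\|K\|\le 1$, while the converse expands the quadratic form and completes the square via $a+b-2\sqrt{ab}=(\sqrt a-\sqrt b)^2$. The only cosmetic difference is that the paper writes the contraction estimate as $\langle y,B^{1/2}K^*KB^{1/2}y\rangle\le\langle y,By\rangle$ (using $K^*K\le I$) rather than $\|KB^{1/2}y\|\le\|B^{1/2}y\|$, which is of course the same thing.
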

\begin{proof} Suppose that
$M$
is positive. Then it follows from Theorem~\ref{contr} that $X=A^{1/2}KB^{1/2}$ for a contraction $K$. Since $K^*K\leq I$, we know that for any $x,y \in \mathscr{H}$,
\begin{align*}
|\langle x,Xy\rangle|^2=&|\langle x,A^{1/2}KB^{1/2}y\rangle|^2 =|\langle A^{1/2}x,KB^{1/2}y\rangle|^2\\
\leq& |\langle A^{1/2}x,A^{1/2}x\rangle|\, |\langle KB^{1/2}y,KB^{1/2}y\rangle|\\
&\qquad (\hbox{by the usual Cauchy-Schwartz inequality in Hilbert spaces})\\
=& |\langle x,Ax\rangle|\, |\langle y,B^{1/2}K^*KB^{1/2}y\rangle|\\
\leq&\langle x,Ax\rangle\,\langle y,By\rangle.
\end{align*}

To prove the converse suppose that \eqref{murjan} is satisfied. For any $x,y \in \mathscr{H}$,
\begin{align*}
\left\langle (x,y)^T, M(x,y)^T\right\rangle=&\langle x,Ax\rangle+2{\rm Re}\langle x,Xy\rangle+\langle y,By\rangle\\
\geq & \langle x,Ax\rangle-2|\langle x,Xy\rangle|+\langle y,By\rangle\\
\geq & \langle x,Ax\rangle-2\langle x,Ax\rangle^{\frac{1}{2}}\,\langle y,By\rangle^{\frac{1}{2}}+\langle y,By\rangle\\
=& \left(\langle x,Ax\rangle^{\frac{1}{2}}-\langle y,By\rangle^{\frac{1}{2}}\right)^2\geq 0\,.
\end{align*}
Hence $M\geq 0$.
\end{proof}

The next result reads as follows.

\begin{proposition}\label{pr2} Let $M$ be given by \eqref{defn of M} such that $A, B, X$ are all self-adjoint and $A+B\in {\mathbb B}_{++}({\mathscr H})$. Then
$$M\geq 0\Longleftrightarrow\|(B+A)^{-1/2}(B-A+2iX)(B+A)^{-1/2}\|\leq 1.$$
\end{proposition}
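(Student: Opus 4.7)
The plan is to reduce the positivity of $M$ to a symmetric block matrix whose two diagonal entries are both $(A+B)/2$, and then invoke Theorem~\ref{nice1}. Since $A+B$ is assumed strictly positive, so is $(A+B)/2$, which puts us squarely in the hypothesis of that theorem.

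First I would introduce the unitary
\[
U \;=\; \frac{1}{\sqrt{2}}\begin{pmatrix} iI & -iI \\ I & I \end{pmatrix} \in \mathbb{B}(\mathscr{H}\oplus\mathscr{H})
\]
and compute $U^*MU$ directly. Using $X^* = X$, a routine $2\times 2$ block multiplication gives
\[
U^*MU \;=\; \frac{1}{2}\begin{pmatrix} A+B & B-A-2iX \\ B-A+2iX & A+B \end{pmatrix}.
\]
Since conjugation by a unitary preserves positivity, $M\ge 0$ is equivalent to $U^*MU\ge 0$. The self-adjointness of $X$ is essential here: it is what guarantees $(B-A-2iX)^{*} = B-A+2iX$, so that the displayed matrix is genuinely of the form \eqref{defn of M}.

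Next I would apply Theorem~\ref{nice1} to $U^*MU$ with the (strictly positive) upper-left block $(A+B)/2$. Setting $W = (B-A-2iX)/2$, so that $W^* = (B-A+2iX)/2$, Theorem~\ref{nice1} gives $U^*MU \ge 0$ iff
\[
\frac{A+B}{2} \;\ge\; W^{*}\!\left(\frac{A+B}{2}\right)^{-1}\! W,
\]
which, after clearing factors of $1/2$, is equivalent to
\[
A+B \;\ge\; (B-A+2iX)(A+B)^{-1}(B-A-2iX).
\]

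Finally I would conjugate both sides by $(A+B)^{-1/2}$. Letting
\[
T \;=\; (A+B)^{-1/2}(B-A-2iX)(A+B)^{-1/2},
\]
one checks that $T^{*} = (A+B)^{-1/2}(B-A+2iX)(A+B)^{-1/2}$, so the inequality above becomes $I \ge T^{*}T$. This is equivalent to $\|T\|\le 1$, and since $\|T\|=\|T^{*}\|$, to the claimed bound $\|(A+B)^{-1/2}(B-A+2iX)(A+B)^{-1/2}\| \le 1$. The only genuinely creative step is spotting the unitary $U$ that symmetrizes the diagonal into $(A+B)/2$ while producing $B-A\pm 2iX$ off the diagonal; every subsequent step is a routine manipulation and a direct appeal to Theorem~\ref{nice1}.
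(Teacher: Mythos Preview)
Your proof is correct and follows essentially the same route as the paper: a unitary congruence that symmetrizes the diagonal to $(A+B)/2$, followed by Theorem~\ref{nice1} and the standard passage from $I\ge T^*T$ to $\|T\|\le 1$. Your unitary differs from the paper's by a sign (you use $\frac{1}{\sqrt{2}}\begin{pmatrix} iI & -iI\\ I & I\end{pmatrix}$ rather than $\frac{1}{\sqrt{2}}\begin{pmatrix} -iI & iI\\ I & I\end{pmatrix}$), which merely swaps the off-diagonal entries of $U^*MU$ and has no effect on the argument.
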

\begin{proof}
Using the unitary congruent via the unitary $U=\frac{1}{\sqrt{2}}\left(\begin{array}{cc} -iI & iI \\ I & I\end{array}\right)$, one can see that $M$ is positive if and only if
$$U^*MU=\frac{1}{2}\left(\begin{array}{cc} B+A & B-A+2iX \\ B-A-2iX & B+A\end{array}\right)$$
is positive. From Theorem \ref{nice1}, the last one is positive if and only if
$$(B+A)\geq (B-A+2iX)(B+A)^{-1}(B-A-2iX)$$
and this is equivalent to $\|(B+A)^{-1/2}(B-A+2iX)(B+A)^{-1/2}\|\leq 1$.
\end{proof}

\begin{proposition}\label{pr3} Let $M$ be given by \eqref{defn of M} such that $X$ is self-adjoint and $M\ge 0$. Then
\begin{equation}\label{common upper bound of + - X}\pm X\leq \frac{1}{2\sqrt{t (1-t)}}\big[tB+(1-t)A\big], \ \mbox{for every $t\in(0,1)$}.\end{equation}
\end{proposition}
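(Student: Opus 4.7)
My plan is to reduce the operator inequality to a one-parameter scalar inequality by testing $M\ge 0$ on vectors of a special form. For any $u\in\mathscr{H}$ and any real $c$, positivity of $M$ applied to $(cu,u)^T$ gives
\begin{equation*}
c^2\langle Au,u\rangle+2c\,\mathrm{Re}\langle Xu,u\rangle+\langle Bu,u\rangle\ge 0.
\end{equation*}
Since $X$ is self-adjoint, $\langle Xu,u\rangle\in\mathbb{R}$, so the $\mathrm{Re}$ is superfluous and the expression is a genuine real quadratic in $c$. Applying this once with $c>0$ and once with $-c<0$ and rearranging each yields the two-sided bound
\begin{equation*}
|\langle Xu,u\rangle|\le \tfrac{c}{2}\langle Au,u\rangle+\tfrac{1}{2c}\langle Bu,u\rangle \qquad (c>0).
\end{equation*}

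To convert the free positive parameter $c$ into the convex-combination form required, I would set $c=\sqrt{(1-t)/t}$ for $t\in(0,1)$, so that $c/2=(1-t)/(2\sqrt{t(1-t)})$ and $1/(2c)=t/(2\sqrt{t(1-t)})$. The right-hand side then becomes $\frac{1}{2\sqrt{t(1-t)}}\langle[(1-t)A+tB]u,u\rangle$. Since this holds for every $u\in\mathscr{H}$ and both $X$ and $-X$ are self-adjoint, the operator inequality \eqref{common upper bound of + - X} follows at once.

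The only non-routine step is the choice of the parametrization $c=c(t)$; no serious obstacle arises. Two equivalent routes would also work: one could invoke the generalized Cauchy--Schwarz proposition just proved, specialized to $x=y=u$, to obtain $|\langle Xu,u\rangle|^2\le\langle Au,u\rangle\langle Bu,u\rangle$, and then apply the weighted scalar AM--GM inequality $2\sqrt{ab}\le s^{-1}a+sb$; alternatively, one could use Theorem~\ref{contr} to write $X=A^{1/2}KB^{1/2}$ with $\|K\|\le 1$, expand $(s^{-1}A^{1/2}\pm sB^{1/2}K^*)(s^{-1}A^{1/2}\pm sB^{1/2}K^*)^*\ge 0$, and use $K^*K\le I$ together with the self-adjointness of $X$ to deduce $\pm 2X\le s^{-2}A+s^2B$ for every $s>0$, before finally setting $s^2=\sqrt{t/(1-t)}$. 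All three approaches collapse to the same scalar optimization.
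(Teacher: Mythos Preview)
Your argument is correct. Testing $M\ge 0$ on vectors of the form $(cu,u)^T$ immediately yields the scalar quadratic inequality, and the substitution $c=\sqrt{(1-t)/t}$ converts it into the desired convex-combination form; since $X$ and $(1-t)A+tB$ are self-adjoint, the pointwise inequality for every $u$ is equivalent to the operator inequality. Your two alternative routes are also valid.

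The paper's proof is closely related but packaged differently: it conjugates $M$ by the unitary
\[
U_t=\begin{pmatrix}(1-t)^{1/2}I & -t^{1/2}I\\ t^{1/2}I & (1-t)^{1/2}I\end{pmatrix},
\]
computes all four blocks of $U_t^*MU_t$, and then reads off the two inequalities from the positivity of the diagonal blocks $M_{11}(t)$ and $M_{22}(t)$; a final swap $t\leftrightarrow 1-t$ puts them in the stated form. Since the $(1,1)$-entry of $U_t^*MU_t$ evaluated at $u$ is exactly $\langle M(\sqrt{1-t}\,u,\sqrt{t}\,u)^T,(\sqrt{1-t}\,u,\sqrt{t}\,u)^T\rangle$, the two arguments are the same computation viewed through different lenses. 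Your direct vector-testing approach is more economical: it avoids computing the off-diagonal blocks and the final $t\leftrightarrow 1-t$ relabeling, and it makes transparent that the result is nothing more than the scalar AM--GM inequality applied to the quadratic form of $M$. The paper's congruence formulation, on the other hand, fits the matrix-theoretic style of the surrounding section and makes the provenance of both diagonal inequalities visible at once.
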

 \begin{proof}For every $t\in (0,1)$, let
 $$U_t=\left(\begin{array}{cc} (1-t)^{1/2}I & -t^{1/2}I \\ t^{1/2}I & (1-t)^{1/2}I\end{array}\right).$$ Then
$U_t$ is a unitary such that
$$U_t^*MU_t=\left(
    \begin{array}{cc}
    M_{11}(t) & M_{12}(t) \\
    M_{21}(t) & M_{22}(t) \\
    \end{array}
   \right),$$
where
\begin{align*}&M_{11}(t)= 2\sqrt{t(1-t)}X+(1-t)A+tB, \\
&M_{12}(t)=-tX+(1-t)X+\sqrt{t(1-t)}(B-A),\\
&M_{21}(t)=(1-t)X-tX+\sqrt{t(1-t)}(B-A),\\
&M_{22}(t)= -2\sqrt{t(1-t)}X+tA+(1-t)B.
\end{align*}
Then from $M_{11}(t)\ge 0$ and $M_{22}(t)\ge 0$ we can get
\begin{equation*}X\le \frac{1}{2\sqrt{t(1-t)}}\big[(1-t)A+tB\big],\ -X\le \frac{1}{2\sqrt{t(1-t)}}\big[(tA+(1-t)B\big],
\end{equation*}
which leads to
\begin{equation*}X\le \frac{1}{2\sqrt{t(1-t)}}\big[tA+(1-t)B\big],\ -X\le \frac{1}{2\sqrt{t(1-t)}}\big[(1-t)A+tB\big]
\end{equation*}
by replacing $t$ with $(1-t)$. This completes the proof of \eqref{common upper bound of + - X}.
\end{proof}

\begin{remark}Substituting $t=\frac12$ into \eqref{common upper bound of + - X} yields $\pm X\leq \frac{ A+B }{2}$, which however does not
mean the positivity of the block matrix $M$ defined by \eqref{defn of M} satisfying $A=A^*,B=B^*,X=X^*$ and $A+B\in {\mathbb B}_{++}({\mathscr H})$.
An immediate example is $X=0$, $A=2I$ and $B=-I$. However, in the converse direction we have these equivalent conditions:
\begin{align}\label{kian1}
\pm X\leq \frac{ A+B }{2} &\quad \Leftrightarrow \quad (B+A)^{-1/2}(\pm 2X)(B+A)^{-1/2} \leq I\nonumber\\
&\quad \Leftrightarrow \quad \|(2B+2A)^{-1/2}(4iX)(2B+2A)^{-1/2}\|\leq 1\nonumber\\
&\quad \Leftrightarrow \quad \left(\begin{array}{cc} A & X \\ X & B\end{array}\right)+ \left(\begin{array}{cc} B & X \\ X & A\end{array}\right)\geq 0.
\end{align}
The last equivalence follows from Proposition \ref{pr2}. But the last block matrix is $M+UMU^*$ with $U=\left(\begin{array}{cc} 0 & I\\ I & 0\end{array}\right)$. So
$$\pm X\leq \frac{ A+B }{2} \quad \Leftrightarrow \quad M+UMU^*\geq0. $$
\end{remark}
In the special case that $A=B$, we have the following proposition:
\begin{proposition}
Let $A,X\in {\mathbb B}({\mathscr H})$ be such that $X$ is self-adjoint and $A$ is positive. Then
$$M=\left(\begin{array}{cc} A & X \\ X & A\end{array}\right)\geq0 \quad \Longleftrightarrow\quad \pm X\leq A.$$
\end{proposition}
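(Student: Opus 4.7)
The plan is to diagonalize the block matrix via a suitable self-adjoint unitary and read off the equivalence directly, avoiding any invertibility assumption on $A$.

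Specifically, let
\[
U=\frac{1}{\sqrt{2}}\begin{pmatrix} I & I \\ I & -I\end{pmatrix}\in{\mathbb B}({\mathscr H}\oplus{\mathscr H}),
\]
which is a self-adjoint unitary (i.e., $U^*U=UU^*=I\oplus I$). A straightforward $2\times 2$ block computation, using that $X=X^*$, gives
\[
U^*MU \;=\; \begin{pmatrix} A+X & 0 \\ 0 & A-X\end{pmatrix}.
\]
Since conjugation by a unitary preserves positivity, $M\ge 0$ if and only if this block-diagonal operator is positive, which happens if and only if $A+X\ge 0$ and $A-X\ge 0$, i.e., $\pm X\le A$. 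This establishes both directions simultaneously, and no obstacle arises because the diagonalization is explicit and purely algebraic.

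As a cross-check (and an alternative proof when $A\in{\mathbb B}_{++}({\mathscr H})$), one could invoke Proposition \ref{pr2} with $B=A$: the condition there reduces to $\|A^{-1/2}XA^{-1/2}\|\le 1$, which by the functional calculus for the self-adjoint operator $A^{-1/2}XA^{-1/2}$ is equivalent to $-I\le A^{-1/2}XA^{-1/2}\le I$, i.e., $\pm X\le A$. For general positive $A$ one would pass to $A+\tfrac1n I$ and take a limit, but the unitary-congruence argument above makes this unnecessary, so I would present only the first proof for maximal cleanliness.
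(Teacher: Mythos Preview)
Your proof is correct and takes a genuinely different, cleaner route than the paper's. The paper reduces to the strictly positive case by replacing $A$ with $A+\tfrac{1}{n}I$ and then invokes the chain of equivalences \eqref{kian1} (which in turn rests on Proposition~\ref{pr2}); when $B=A$ the swap unitary $V=\begin{pmatrix}0&I\\I&0\end{pmatrix}$ satisfies $VMV^*=M$, so \eqref{kian1} collapses to $\pm X\le A\Leftrightarrow 2M\ge 0$. By contrast, you diagonalize $M$ in one stroke with the Hadamard-type unitary $U=\tfrac{1}{\sqrt 2}\begin{pmatrix}I&I\\I&-I\end{pmatrix}$, obtaining $U^*MU=(A+X)\oplus(A-X)$, which immediately gives the equivalence for \emph{all} positive $A$ with no limiting argument and no appeal to earlier propositions. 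Your approach is more self-contained and transparent; the paper's approach has the minor virtue of illustrating how the proposition fits into the framework of \eqref{kian1}, but at the cost of extra machinery. The cross-check you sketch via Proposition~\ref{pr2} is essentially the paper's argument in compressed form, so your remark that ``one would pass to $A+\tfrac{1}{n}I$ and take a limit'' is exactly what the paper does.
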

\begin{proof}Clearly, $M\ge 0\Longleftrightarrow \left(
              \begin{array}{cc}
               A+\frac1n I& X \\
               X & A+\frac1n I \\
              \end{array}
              \right)\ge 0$ for any $n\in\mathbb{N}$, and similarly
$\pm X\leq A\Longleftrightarrow \pm X\leq A+\frac1n I$ for any $n\in\mathbb{N}$. So it suffices to prove that
for any $n\in\mathbb{N}$,
$$\left(
              \begin{array}{cc}
               A+\frac1n I& X \\
               X & A+\frac1n I \\
              \end{array}
              \right)\ge 0\Longleftrightarrow \pm X\leq A+\frac1n I.$$
This follows immediately from \eqref{kian1} since $A+\frac1n I$ is strictly positive.
\end{proof}

 For example, it is evident that if $Y$ is a Hermitian (self-adjoint) matrix, then $\left(\begin{array}{cc} |Y| & Y \\ Y & |Y|\end{array}\right)\geq0$. Therefore, if $Y,Z$ are Hermitian, then
 $$ \left(\begin{array}{cc} |Y|\circ |Z| & Y\circ Z \\ Y\circ Z & |Y| \circ|Z|\end{array}\right)\geq0,$$
 where $\circ$ denotes the Hadamard product. We then obtain $ \pm Y\circ Z\leq |Y|\circ |Z|$.
 Note that the inequality $ |Y\circ Z|\leq |Y|\circ |Z|$ does not hold in general. To see this consider $Y=\left(\begin{array}{cc} 2 & 2 \\ 2 & 1\end{array}\right)$ and $Z=\left(\begin{array}{cc} 1 & 3 \\ 3 & 3\end{array}\right)$.


\bibliographystyle{amsplain}

\end{document}